\documentclass[12pt,twoside]{amsart}

\usepackage[usenames]{color}

\usepackage{latexsym}
\usepackage{amssymb}
\usepackage{amsfonts}
\usepackage{amstext}
\usepackage{multicol}
\usepackage{amsmath, amsthm}
\usepackage{setspace}
\usepackage{appendix}


\newtheorem{theorem}{Theorem}[section]
\newtheorem{lemma}[theorem]{Lemma}
\newtheorem{corollary}[theorem]{Corollary}
\newtheorem{proposition}[theorem]{Proposition}
\newtheorem{definition}[theorem]{Definition}
\newtheorem{remark}[theorem]{Remark}
\newtheorem{assumption}[theorem]{Assumption}

\def\P{{\mathbb P}}
\def\E{{\mathbb E}}
\def\V{{\Vert}}
\def\v{{\vert}}
\def\R{{\mathbb R}}
\def\HS{{\rm HS}}
\def\L{{\mathcal L}}
\def\N{{\mathcal N}}
\def\vv{\varepsilon}
\def\span{{\rm span}}

\begin{document}
\title{Upper bound for intermediate singular values of random matrices}
\date{}
\author{Feng Wei}
\thanks{Partially supported by M. Rudelson's  NSF Grant DMS-1464514,  and USAF Grant FA9550-14-1-0009. }

\maketitle

\begin{abstract}
In this paper, we prove that an $n\times n$
matrix $A$ with independent centered subgaussian entries satisfies
\[
  s_{n+1-l}(A) \le   C_1t \frac{l}{\sqrt{n}}
\]
 with probability at least $1-\exp(-C_2tl)$. This yields $s_{n+1-l}(A) \sim  \frac{cl}{\sqrt{n}}$ in combination with a known lower bound. These results can be generalized to the rectangular matrix case.

\end{abstract}

\section{Introduction}

Consider an $n\times m$ real matrix $A$ with $n\ge m$. The singular values $s_k(A)$ of $A$, where $k=1,2,\cdots, n$, are the eigenvalues of $\sqrt{A^T A}$ arranged in non-increasing order.
The non-asymptotic singular value distribution of random i.i.d. sub-gaussian matrix is an important and interesting subfield in random matrix theory.
The first result in this direction was obtained in \cite{InvofRM}, where it was proved that the smallest singular value of a square i.i.d. sub-gaussian matrix is bounded below by $n^{-3/2}$ with high probability.
This result was later extended and improved  in a number of papers, including \cite{TaoVuAnnals,TVsmallestSVdist,LOandInvofRM,InvofsparsenonHerm,Invofheavytail}.
The above mentioned results pertain to square matrices.
 A probabilistic lower bound for the smallest singular value  of a  rectangular matrix was obtained by M.~Rudelson and R.~Vershynin \cite{SMstSvRect}. They proved that an $n\times (n-l)$ matrix has smallest singular value lower bounded by $\frac{\vv l}{\sqrt{n}}$ with probability at least $1-\left(C\vv \right)^{l}-\exp(-Cn)$. Using this result, one can show that for a square i.i.d. sub-gaussian matrix $A$, $s_{n+1-l}(A)>c\frac{l}{\sqrt{n}}$ with high probability.

However, the optimal upper bound of the singular values for general sub-gaussian matrices is unknown. Prior to this paper, the only progress in this direction was made by Szarek \cite{DisttoLinfSZ} who proposed an optimal upper bound for the gaussian matrix. Szarek proved that for a standard gaussian i.i.d. matrix $G$, $\frac{Cl}{\sqrt{n}}\le s_{n+1-l}(G)\le \frac{Cl}{\sqrt{n}}$ with probability at least  $1-\exp(-Cl^2)$. This result suggests that $l$th smallest singular value of an i.i.d. sub-gaussian matrix is concentrated around $\frac{l}{\sqrt{n}}$.

Although the optimal upper bound is not proved for general matrices, some  results can be deduced.
T. Tao and V. Vu have established the universal behavior of small singular values in \cite{TVsmallestSVdist} (see Theorem 6.2 \cite{TVsmallestSVdist}). Combined with Szarek's Theorem 1.3 in \cite{DisttoLinfSZ}, their approach allows us to deduce some non-asymptotic bounds for random i.i.d. square matrix under a moment condition. However, their bound only works for $l\le n^c$ where $c$ is a small constant. Tao and Vu's approach  \cite{TVsmallestSVdist} is based on the Berry-Esseen Theorem for the frames and does not allow one to obtain exponential bounds for the probability as we do in our Theorem \ref{upperbound}.
Also,  C. Cacciapuoti, A. Maltsev, B. Schlein estimated the rate of convergence of the empirical measure of singular values to the limit distribution near the hard edge (see \cite{NOUBofsvs} Theorem 3). Theorem 3 in \cite{NOUBofsvs} can be used to derive an upper bound of the form $\frac{cl^C}{\sqrt{n}}$ \cite{NOUBofsvs}.  Better understood is the upper bound for the smallest singular value. M. Rudelson and R. Vershynin were the first to prove the smallest singular value of the i.i.d. sub-gaussian matrix is also bounded from above by $\frac{c}{\sqrt{n}}$ with high probability (see \cite{UBssvMR}). A different proof with an exponential tail probability can be found in a very recent paper by H. Nguyen and V. Vu \cite{SsvUB2}.

In this paper, we prove the upper bound on the singular values under two assumptions:  that the entries of the matrix are non-degenerate; and that they have a fast tail decay. The first assumption is quantified in terms of the Levy concentration function and the second is quantified  in  terms of the $\psi_\theta$-norm. Next we provide definitions.
\begin{definition}
Let $Z$ be a random vector that takes values in $\mathbb{C}^n$. The concentration function of $Z$ is defined as
$$
\mathcal{L}(Z,t)=\sup_{u\in \mathbb{C}^n} \P \{\V Z- u \V_2 \le t\}, t\ge 0.
$$
\end{definition}

\begin{definition}
Let $\theta>0$. Let $Z$ be a random variable on a probability space $(\Omega, \mathcal{A},\P)$. Then the $\psi_{\theta}$-norm of $Z$ is defined as
$$
\V Z \V_{\psi_\theta}:=\inf\left\{ \lambda>0: \E \exp\left( \frac{| Z |}{\lambda} \right)^{\theta}   \le 2\right\}
$$
\end{definition}
If $\V X \V_{\psi_\theta} < \infty$, then $X$ is called a $\psi_\theta$ random variable. This condition is satisfied for broad classes of random variables. In particular, a bounded random variable is $\psi_\theta$ for any $\theta>0$, a normal random variable is $\psi_2$, and a Poisson variable is $\psi_1$.

In this paper, we prove that for all $l$, $s_{n+1-l}(A) \ge \frac{Ctl}{\sqrt{n}}$ with an exponentially small probability, where $A$ is a random matrix under the following assumption:
\begin{assumption}\label{asump1}
Let $p>0$.
Let $A$ be an $n\times m$ random matrix with i.i.d. entries that have mean 0, variance 1 and $\psi_2$-norm $K$. Assume also  that
there exists $0<s \le s_0(p,K)$ such that
$$
  \L(A_{i,j}, s) \le p s.
$$
Here, $s_0(p,K)$ is a given function depending only on $p$ and $K$.
\end{assumption}
A concrete value of $s_0(p,K)$ can be detemined by tracing the proof of Theorem \ref{upperbound}.

\begin{remark}{\rm
The condition on the Levy concentration function is automatically satisfied if the density of the entries is bounded by $p$. However, our result holds in a much more general setting because we require this condition to hold only for one fixed $s$  and not for all $s>0$. This assumption can be viewed as a discrete analog of the bounded density condition.}
\end{remark}

\begin{remark}{\rm
The analysis of the proof for Theorem \ref{upperbound} shows that it is enough to take $s_0(p,K)=c(K)\min \{ p^{-1}, 1 \}$, where $c(K)$ is a small constant that depends only on $K$.
}
\end{remark}

We prove the following main theorem for a random matrix $A$ satisfying Assumption \ref{asump1}:
\begin{theorem}{\rm (Upper bound for singular values of an i.i.d. sub-gaussian square matrix)}\label{upperbound}
Let $A$ be an $n\times n$ random matrix that satisfies Assumption \ref{asump1} with some  $s_0(p,K)$ that depends only on $p,K$. Then there exist constants $C_1,C_2>0$ such that for all $t>1$, for all $l$ between 1 and $n$,
$$
\P\left( s_{n+1-l}(A)\le C_1 \frac{t l}{\sqrt{n}} \right) \ge 1-\exp(-C_2 t l)
$$
where $C_1,C_2$ are constants that depend only on $K,p$.
\end{theorem}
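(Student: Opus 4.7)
The plan is to upper bound $s_{n+1-l}(A)$ by explicitly constructing an $l$-dimensional subspace $E\subset\mathbb{R}^n$ on which $A$ has small operator norm, and then applying the Courant--Fischer min--max characterization
\[
 s_{n+1-l}(A) \;=\; \min_{\dim E = l} \max_{x \in E,\ \|x\|=1} \|Ax\|.
\]
To choose $E$, I partition the columns of $A$ as $A=[B\mid C]$ with $B$ of size $n\times(n-l)$ and $C$ of size $n\times l$, and condition on $B$. I take $E$ to be the graph of the map $y\mapsto -B^+Cy$, namely
\[
 E \;=\; \{ (-B^+Cy,\ y)\in\mathbb{R}^n \;:\; y\in\mathbb{R}^l \},
\]
where $B^+=(B^TB)^{-1}B^T$ is the Moore--Penrose pseudoinverse. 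Since the last $l$ coordinates of an element of $E$ recover $y$, this map is injective and $\dim E=l$.

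A direct computation shows that for $x=(-B^+Cy,y)\in E$, $Ax=P_{\mathrm{Range}(B)^\perp}Cy$ and $\|x\|^2=\|y\|^2+\|B^+Cy\|^2$. Consequently, letting $V$ be an orthonormal basis of $\mathrm{Range}(B)^\perp$ and $M_2=I_l+(B^+C)^T(B^+C)$,
\[
 s_{n+1-l}(A)^2 \;\le\; \max_{y\ne 0} \frac{\|P_{\mathrm{Range}(B)^\perp}Cy\|^2}{\|y\|^2+\|B^+Cy\|^2} \;=\; \bigl\|(V^TC)\,M_2^{-1/2}\bigr\|_{\mathrm{op}}^2.
\]
The key structural observation is that, conditionally on $B$, the matrix $V^TC$ depends only on $P_{\mathrm{Range}(B)^\perp}C$, whereas $M_2$ depends only on $P_{\mathrm{Range}(B)}C$; these two projections of a sub-gaussian matrix are independent in the Gaussian case and essentially decoupled in general.

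The analysis then splits into two probabilistic estimates. First, the $l\times l$ matrix $V^TC$ has entries whose $\psi_2$-norm is bounded by $CK$, so standard operator-norm concentration for sub-gaussian matrices yields $\|V^TC\|_{\mathrm{op}}^2 \le C\,tl$ with probability at least $1-\exp(-c_1tl)$. Second, writing $D=B^+C$, I aim to establish $\lambda_{\min}(M_2)=1+\lambda_{\min}(D^TD)\ge c\,n/(tl)$ with probability at least $1-\exp(-c_2tl)$. On the intersection of these two events,
\[
 s_{n+1-l}(A)^2 \;\le\; \|V^TC\|_{\mathrm{op}}^2\cdot\|M_2^{-1}\|_{\mathrm{op}} \;\le\; C\,tl \cdot \frac{tl}{c\,n} \;=\; \tilde C\,\frac{t^2l^2}{n},
\]
giving $s_{n+1-l}(A)\le C_1 tl/\sqrt n$ with probability at least $1-\exp(-C_2tl)$.

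The main obstacle is the lower bound on $\lambda_{\min}(M_2)$ with the sharp exponential tail $\exp(-c\,tl)$. Writing $D^TD=E^T(B^TB)^{-1}E$ for an $(n-l)\times l$ matrix $E$ with i.i.d.\ sub-gaussian entries (obtained by orthogonalizing $B$), one has $\mathbb{E}[D^TD\mid B]=\mathrm{tr}((B^TB)^{-1})\cdot I_l$. The Rudelson--Vershynin lower bound for the smallest singular value of the rectangular matrix $B$ (from \cite{SMstSvRect}) guarantees $\mathrm{tr}((B^TB)^{-1})\gtrsim n/l$ with high probability, putting the mean at the correct scale. The delicate task is to upgrade this into a lower bound on $\lambda_{\min}(D^TD)$ uniformly over the unit sphere $S^{l-1}$ in $\mathbb{R}^l$, paying only $\exp(-c\,tl)$ in failure probability. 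This requires a small-ball estimate for each quadratic form $w^TD^TDw$, $w\in S^{l-1}$, where the Levy concentration assumption in Assumption~\ref{asump1} is essential, together with a covering or least-common-denominator argument over $S^{l-1}$ to convert the pointwise estimate into a uniform bound. Combining this sphere-uniform small-ball estimate with the block-decomposition reduction above then yields Theorem~\ref{upperbound}.
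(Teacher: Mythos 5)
Your construction is essentially the paper's, rewritten with pseudoinverses in place of biorthogonal systems: after permuting columns, your subspace $E=\{(-B^+Cy,y)\}$ is precisely the paper's $H_l^\perp$ parametrized as $y\mapsto A^{-1}P_l^\perp A_l y$; the inequality $s_{n+1-l}(A)\le \|V^TC\,M_2^{-1/2}\|$ is the paper's lower bound for $A^{-1}$ restricted to $H_l^\perp$; the bound on $\|V^TC\|$ is the paper's Step 1.1 for $\|P_l^\perp A_l\|$; and $\mathrm{tr}\big((B^TB)^{-1}\big)=\|B^+\|_{\HS}^2=\sum_k \mathrm{dist}(X_k,H_{l,k})^{-2}$ is the quantity controlled in the paper's Steps 1.2.1 and 2.3.2. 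So this is not a new route. The genuine gap is in the step you defer to ``a covering or least-common-denominator argument over $S^{l-1}$'': this is in fact the crux, and the straightforward covering argument does not close. The pointwise small-ball bound for $\|Dw\|=\|B^+Cw\|$ coming from Theorem \ref{SBofLIofHDdistr} has exponent proportional to the stable rank $r(B^+)=\|B^+\|_{\HS}^2/\|B^+\|^2$, and because $\|B^+\|=1/s_{\min}(B)$ can only be bounded by $\alpha\sqrt n/l$ at the cost of failure probability $(C/\alpha)^l+e^{-Cn}$, the usable stable rank is $\gtrsim l/\alpha^4$, not $\gtrsim l$. A net on $S^{l-1}$ must have mesh $\vv\lesssim 1/\alpha$ (so that $\vv\|D\|\lesssim \sqrt{n/l}$), hence cardinality $(C\alpha)^l$. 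The resulting union bound $(C\alpha)^l (C\beta)^{cl/\alpha^4}$ cannot be made $\le e^{-cl}$ for any choice of $\alpha,\beta$: the $\alpha^l$ from the net always beats the gain from the small-ball estimate because the small-ball exponent is shrunk by $\alpha^{-4}$.

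The paper's resolution is a dimension-reduction device that is absent from your plan: one proves the bound on an $l'$-dimensional coordinate slice with $l'=\lfloor\kappa l\rfloor$, so the net has cardinality $(C\alpha)^{\kappa l}$ while the small-ball exponent remains $\sim l/\alpha^4$; taking $\kappa\ll\alpha^{-4}$ closes the union bound, and one obtains $s_{n+1-l'}(A)\lesssim l/\sqrt n = \kappa^{-1}l'/\sqrt n$, which is the claim up to a constant. Without introducing $\kappa$ (or replacing the covering by a genuinely different mechanism such as a full LCD analysis of $B^+$, which the paper explicitly flags as an open direction in its closing remark), the union bound fails. Two smaller points: (i) the Rudelson--Vershynin theorem you cite lower-bounds $s_{\min}(B)$, hence upper-bounds $\|(B^TB)^{-1}\|$; it does not by itself give the lower bound $\mathrm{tr}\big((B^TB)^{-1}\big)\gtrsim n/l$ — the paper gets that from the distance-to-subspace theorem via the identity above; (ii) the claimed ``decoupling'' of $V^TC$ and $M_2$ conditional on $B$ is not needed (a union bound suffices) and is in any case false outside the Gaussian case, so it should not be leaned on.
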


\begin{remark}{\rm In \cite{SsvUB2}, Nguyen and Vu obtained a sharp bound for the smallest singular value of i.i.d. sub-gaussian matrices with exponential tail probability. Theorem \ref{upperbound} recovers the result obtained by Nguyen and Vu under Assumption \ref{asump1} and generalizes that result to all $l$. }
\end{remark}

\begin{remark}{\rm Unlike the probability bound deduced from \cite{TVsmallestSVdist}, our probability tail bound is exponential type. Szarek's probability estimate \cite{DisttoLinfSZ} suggests that the optimal probability bound for $\P \left( s_{n+1-l}(A)\le \frac{c l}{\sqrt{n}} \right)$ is $1-\exp(-Cl^2)$. }
\end{remark}

\begin{remark}{\rm Possible applications of Theorem \ref{upperbound} include eigenvector $l_{\infty}$ delocalization of random matrices \cite{Linfdeloc}. For that, one has to consider $A-zI$ instead of the matrix $A$. Further effort would be needed to generalize our result to the case with a shift. }
\end{remark}

Also, Theorem \ref{upperbound} can be extended to rectangular matrices easily (see Section 4). And more precisely, we have the following corollary:
\begin{corollary}{\rm (Upper bound for singular values of an i.i.d. sub-gaussian matrix)}\label{upperboundRect}
Let $A$ be an $n\times (n-k) $ random matrix that satisfies Assumption \ref{asump1}  with some  $s_0(p,K)$ that depends only on $p,K$. Then there exist constants $C_1,C_2>0$ such that for all $ t>1$ and $l$ between 1 and $n$,
$$
\P\left( s_{n+1-l}(A)\le C_1\frac{t l}{\sqrt{n}} \right) \ge 1-\exp(-C_2 t l)
$$
where $C_1,C_2$ are constants that depend only on $K,p$.
\end{corollary}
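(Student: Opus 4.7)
The plan is to reduce the rectangular case to the square case established in Theorem \ref{upperbound} via a column-augmentation argument.

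First consider the case $l > k$. I would augment $A$ with an independent $n \times k$ matrix $B$ whose entries are i.i.d.\ copies of the entries of $A$, producing an $n \times n$ matrix $\tilde{A} = [\,A \mid B\,]$. Because $B$ is independent of $A$ and has the same marginal distribution, $\tilde{A}$ has i.i.d.\ entries and satisfies Assumption \ref{asump1} with the same parameters $p,K$. Theorem \ref{upperbound} applied to $\tilde{A}$ then yields
\[
\P\!\left(s_{n+1-l}(\tilde{A}) \le C_1 \frac{tl}{\sqrt{n}}\right) \ge 1 - \exp(-C_2 t l).
\]

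The remaining ingredient is the monotonicity of singular values under adjoining columns. View $AA^T$ and $\tilde{A}\tilde{A}^T$ as $n \times n$ positive semidefinite matrices, so that the eigenvalues of $AA^T$ are $s_1(A)^2 \ge \cdots \ge s_{n-k}(A)^2 \ge 0 = \cdots = 0$ (with $k$ trailing zeros, corresponding to $s_{n-k+1}(A)=\cdots=s_n(A)=0$ by convention). Since $\tilde{A}\tilde{A}^T - AA^T = BB^T \succeq 0$, Weyl's monotonicity inequality for eigenvalues of Hermitian matrices gives $s_i(A) \le s_i(\tilde{A})$ for every index $i\in\{1,\dots,n\}$. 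Taking $i = n+1-l$ transfers the probability bound above from $\tilde{A}$ to $A$. For the easy remaining range $l \le k$, we have $n+1-l > n-k$, so $s_{n+1-l}(A)=0$ under the convention and the inequality holds trivially.

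There is no genuine obstacle here: the corollary is a short and routine consequence of Theorem \ref{upperbound} combined with the elementary fact that adjoining columns can only increase (not decrease) every singular value. The one point that requires care is fixing a consistent indexing convention in the rectangular regime so that the comparison $s_i(A)\le s_i(\tilde{A})$ is unambiguous for all $i$; once that is in place, the estimate and the probability bound transfer directly.
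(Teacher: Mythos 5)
Your proof is correct and takes essentially the same approach as the paper: augment $A$ by $k$ additional i.i.d.\ columns to obtain an $n\times n$ matrix $\tilde A$, apply Theorem~\ref{upperbound} to $\tilde A$, and transfer the bound on $s_{n+1-l}(\tilde A)$ back to $s_{n+1-l}(A)$. The only difference is the transfer step: the paper uses the min-max characterization (an $l$-dimensional subspace on which the image is small, intersected with ${\rm span}\{e_1,\dots,e_{n-k}\}$), whereas you invoke Weyl's monotonicity for $\tilde A\tilde A^T = AA^T + BB^T \succeq AA^T$, which is a cleaner packaging of the same elementary linear-algebra fact.
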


A direct application of Theorem \ref{upperbound} and Theorem \ref{leastSVLB} leads us to a generalization of Theorem 1.3 in \cite{DisttoLinfSZ}.
\begin{corollary}{\rm (Non-asymptotic singular values distribution of i.i.d. sub-gaussian square matrix)}\label{mainthm}
Let $A$ be an $n\times n$ random matrix that satisfies Assumption \ref{asump1} with some  $s_0(p,K)$ that depends only on $p,K$. Then there exist $0 < C_1<C_2$ and $C_3>0$, such that for all $l$ between 1 and $n$,
$$
\P\left( \frac{C_1 l}{\sqrt{n}} \le s_{n+1-l}(A) \le \frac{C_2 l}{\sqrt{n}}  \right) \ge  1- \exp(-C_3 l)
$$
where $C_i$s are  constants that depend only on $K,p$.
\end{corollary}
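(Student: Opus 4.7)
The plan is to derive Corollary \ref{mainthm} by combining the upper bound provided by Theorem \ref{upperbound} with the lower bound recorded as Theorem \ref{leastSVLB}, which is a Rudelson--Vershynin type estimate. Since both tail bounds are already exponential in $l$, no new probabilistic work is needed: the proof is essentially a union bound with a careful choice of parameters.

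First I would apply Theorem \ref{upperbound} with $t=1$. This yields constants $C_2=C_2(K,p)>0$ and $c=c(K,p)>0$ with
\[
\P\left( s_{n+1-l}(A) > \frac{C_2 l}{\sqrt{n}} \right) \le \exp(-c\, l)
\]
for every $1\le l\le n$, handling the upper half of the two-sided event.

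Next I would invoke Theorem \ref{leastSVLB}. Based on the Rudelson--Vershynin bound cited in the introduction, it should give, for every $\vv>0$,
\[
\P\left( s_{n+1-l}(A) < \frac{\vv\, l}{\sqrt{n}} \right) \le (C\vv)^{l} + \exp(-c' n).
\]
I would then fix $\vv_0=\vv_0(K,p)$ small enough that $C\vv_0\le e^{-1}$, so that $(C\vv_0)^l \le \exp(-l)$; since $l\le n$, also $\exp(-c'n)\le \exp(-c'l)$. Setting $C_1=\vv_0$ and combining constants, the lower-tail failure probability is at most $2\exp(-c'' l)$.

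A union bound over the two failure events then delivers
\[
\P\left( \frac{C_1 l}{\sqrt{n}} \le s_{n+1-l}(A) \le \frac{C_2 l}{\sqrt{n}} \right) \ge 1 - \exp(-C_3\, l)
\]
after adjusting $C_3$ to absorb the factor of $3$. The only thing to verify is the ordering $C_1<C_2$, which can be arranged by further shrinking $\vv_0$ if necessary. In short, there is no genuine obstacle: the nontrivial content is already encoded in Theorem \ref{upperbound}, and the corollary is a bookkeeping step that marries it to the known sharp lower bound on intermediate singular values.
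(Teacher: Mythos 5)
Your overall strategy is the same as the paper's: take the union bound of the upper-tail estimate from Theorem \ref{upperbound} and a lower-tail estimate derived from Theorem \ref{leastSVLB}, then adjust constants. The upper-tail half is handled correctly (modulo the cosmetic point that Theorem \ref{upperbound} is stated for $t>1$, so take $t=2$ rather than $t=1$).

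The gap is in the lower-tail half. You write that Theorem \ref{leastSVLB} ``should give'' $\P\bigl(s_{n+1-l}(A) < \vv l/\sqrt{n}\bigr)\le (C\vv)^l + e^{-c'n}$, but Theorem \ref{leastSVLB} as stated controls only the \emph{smallest} singular value $s_n(G)$ of a rectangular $N\times n$ matrix with $N\ge n$; it says nothing directly about the intermediate singular value $s_{n+1-l}(A)$ of a square $n\times n$ matrix. The missing step --- and the only genuinely nontrivial content of this corollary --- is a restriction/interlacing argument. In the paper's proof one takes $J$ to be the submatrix of $A$ consisting of the first $n-l$ (in fact one should take $n-l+1$) rows, observes by the variational characterization of singular values that $s_{n+1-l}(A)\ge s_{\min}(J)$, and then applies Theorem \ref{leastSVLB} to $J^T$, which is an $n\times(n-l+1)$ matrix, obtaining $\P\bigl(s_{\min}(J)\le \vv(\sqrt{n}-\sqrt{n-l})\bigr)\le (C\vv)^l+e^{-C'n}$; since $\sqrt{n}-\sqrt{n-l}\ge l/(2\sqrt{n})$ this yields exactly the inequality you quoted. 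Without spelling out this reduction to a rectangular submatrix, the display you wrote is an assertion rather than a consequence of the cited theorem. Once that step is inserted, the rest of your bookkeeping (choosing $\vv_0$ small so $C\vv_0\le e^{-1}$, absorbing $e^{-c'n}\le e^{-c'l}$, ensuring $C_1<C_2$) is correct and matches the paper.
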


A similar proof can lead to an analog in the rectangular case:
\begin{corollary}{\rm (Non-asymptotic distribution of singular values in the i.i.d. sub-gaussian rectangular matrix)}\label{mainthmrect}
Let $A$ be an $n\times (n-k)$ random matrix that satisfies Assumption \ref{asump1}  with some  $s_0(p,K)$ that depends only on $p,K$. Then there exist $ 0 < C_1<C_2$ and $C_3>0$, such that for all $l$ between  $k$ and $n$,
$$
\P\left( \frac{C_1 l}{\sqrt{n}} \le s_{n+1-l}(A) \le \frac{C_2 l}{\sqrt{n}}  \right) \ge  1- \exp(-C_3 l)
$$
where $C_i$s are  constants that depend only on $K,p$.
\end{corollary}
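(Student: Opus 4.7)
The plan is to derive Corollary \ref{mainthmrect} by combining the upper bound from Corollary \ref{upperboundRect} with the matching lower bound supplied by Theorem \ref{leastSVLB}, and then taking a union bound. Both ingredients are already at our disposal, so the argument is essentially a packaging result; the heavy analytic work has been done in establishing the upper bound.

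For the upper half, I would specialize Corollary \ref{upperboundRect} to a fixed constant $t=t_0>1$. This yields, for every $l$ in the range $k\le l\le n$,
\[
\P\left(s_{n+1-l}(A)\le \frac{C_2 l}{\sqrt{n}}\right) \ \ge\ 1-\exp(-C' l),
\]
for constants $C_2,C'$ depending only on $K$ and $p$.

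For the lower half, I would invoke Theorem \ref{leastSVLB}. If that theorem is already phrased for intermediate singular values, the conclusion $s_{n+1-l}(A)\ge C_1 l/\sqrt{n}$ with exponential tail is immediate. If instead it provides only the Rudelson--Vershynin type bound for the smallest singular value of a rectangular matrix, I would reduce to it by passing to a submatrix: select any $n\times (n+1-l)$ submatrix $B$ of $A$, which is possible whenever $l\ge k+1$ (the endpoint $l=k$ is vacuous via the convention $s_{n-k+1}(A)=0$). The variational characterization
\[
s_{n+1-l}(A)=\max_{\substack{V\subset \R^{n-k}\\ \dim V=n+1-l}}\ \min_{\substack{x\in V\\ \|x\|=1}}\|Ax\|
\]
ensures $s_{n+1-l}(A)\ge s_{\min}(B)$. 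Applying the Rudelson--Vershynin estimate \cite{SMstSvRect} to $B$, which is an $n\times (n-(l-1))$ matrix, yields $s_{\min}(B)\ge \varepsilon(l-1)/\sqrt{n}$ with failure probability at most $(C\varepsilon)^{l-1}+\exp(-cn)$. Fixing $\varepsilon$ as a small absolute constant and using $l\le n$ so that $\exp(-cn)\le \exp(-cl)$, the failure probability collapses to $\exp(-C'' l)$, and the constant $\varepsilon(l-1)/\sqrt{n}$ can be rewritten as $C_1 l/\sqrt{n}$ after shrinking $C_1$ to absorb the $l-1$ vs $l$ discrepancy for $l\ge 2$.

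A union bound over the upper and lower deviation events then produces the two-sided estimate with probability at least $1-\exp(-C_3 l)$ after a final adjustment of constants, with $C_1<C_2$ since the two bounds are derived independently and their constants can be separated. The only delicate point, if any, is converting the mixed $(C\varepsilon)^{l-1}+\exp(-cn)$ tail of Rudelson--Vershynin into a clean $\exp(-cl)$ tail matching the upper bound; this is precisely why $\varepsilon$ must be frozen at a constant level rather than allowed to decay in $l$. Everything else is routine bookkeeping.
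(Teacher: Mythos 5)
Your proposal is correct and matches the paper's intended argument: the paper proves Corollary \ref{mainthmrect} by the same two-step scheme as Corollary \ref{mainthm}, namely fixing $t$ in the upper bound of Corollary \ref{upperboundRect} and obtaining the lower bound by applying Theorem \ref{leastSVLB} to a suitable rectangular submatrix of $A$, then taking a union bound. Your version of the lower bound (passing to an $n\times(n+1-l)$ column submatrix so that $s_{n+1-l}(A)\ge s_{\min}(B)$ by the variational characterization) is in fact the cleaner way to write what the paper's proof of Corollary \ref{mainthm} does with its submatrix $J$; the off-by-one discrepancies in your $(C\varepsilon)^{l-1}$ exponent and in $\varepsilon(l-1)/\sqrt{n}$ versus $\varepsilon l/(2\sqrt{n})$ are harmless and absorbed into constants, as you note.
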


In Section 2, we present some preliminary results that are needed to prove Theorem \ref{upperbound}. In Section 3, we provide proof of Theorem \ref{upperbound}. We prove Theorems \ref{upperboundRect} and \ref{mainthm} In Section 4.

\section{Notation and Preliminaries}

Throughout this paper, $c$ denotes absolute constants, $C$ denotes constants that may depend only on the parameters $K,p$. Note that these constants may vary from line to line. $S^{n-1}$ denotes the $n$  dimensional sphere, i.e., the sphere in $\R^n$ which itself is an $(n-1)$-dimensional manifold. $S_E$ denotes the sphere of a subspace $E$, i.e., $S_E=S^{n-1}\cap E$.

For a $n\times n$ random matrix $A$, as in Theorem \ref{upperbound}, we denote by $A_l$ the $n\times l$  matrices of the first $l$  columns of $A$. $A_{n-l}$ denotes the $n\times (n-l)$ matrix of the last $n-l$ columns of $A$. Without loss of generality, we can assume $A$ is a$.$s. invertible. If not we prove the theorem for $A+\vv G$, where $G$ is an independent gaussian matrix. Then the result holds for $A$ up to an abssolute constant by sending $\vv$ to zero. $X_k$ will denote the $k$th column of matrix $A$, and we use the following notations
\begin{itemize}
\item $H_l:=$span$(X_j)_{j>l}$.
\item $H_{l,k}:=$span$(X_j)_{j>l,j\neq k}$.
\item $P_l,P_{l,k}$ are the orthogonal projections onto $H_l,H_{l,k}$, respectively.
\item $P_l^{\perp},P_{l,k}^{\perp}$ are the orthogonal projections onto $H_l^{\perp},H_{l,k}^{\perp}$, respectively.
\item $X_k^*:=(A^{-1})^*e_k$, i.e., the $k$-th column of $(A^{-1})^*$.
\item $Y_k^*:=P_l X_k^*$, $k=l+1,l+2,\cdots,n$.
\end{itemize}

\subsection{Biorthogonal system}
Consider vectors $(v_k)_{k=1}^n$ and $(v_k^*)_{k=1}^n$ that belong to an $n-$dimensional Hilbert space $H$. Recall that the system $(v_k,v_k^*)_{k=1}^n$ is called a biorthogonal system in $H$ if $\langle v_j,v_k^*\rangle =\delta_{j,k}$ for all $j,k=1,2,\cdots,n$. The system is called complete if span$(v_k)=H$.
The following proposition summarizes some elementary known properties of biorthogonal systems. For the reader's convenience, we provide the proof of this elementary fact in the appendix.
\begin{proposition}\label{BOSp}
\begin{enumerate}
\item  Let $D$ be an $n\times n$ invertible matrix with columns $v_k=De_k$, $k=1,2,\cdots,n$. Define $v_k^*=(D^{-1})^*e_k$. Then $(v_k,v_k^*)_{k=1}^n$ is a complete biorthogonal system in $\mathbb{R}^n$.\\
\item  Let $(v_k)_{k=1}^n$ be a linearly independent system in an $n-$dimensional Hilbert space $H$. Then there exist unique vectors $(v_k^*)_{k=1}^n$ such that $(v_k,v_k^*)_{k=1}^n$ is a biorthogonal system in $H$. This system is complete.\\
\item  Let $(v_k,v_k^*)_{k=1}^n$ be a complete biorthogonal system in a Hilbert space $H$. Then $\V v_k^* \V_2 = 1/{\rm dist}(v_k,{\rm span}(v_j)_{j\neq k}) $ for $k=1,2,\cdots,n$.
\end{enumerate}
\end{proposition}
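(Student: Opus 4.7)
The plan is to handle the three parts sequentially, treating (1) as a direct matrix computation, reducing (2) to (1) after choosing a basis, and deriving (3) from a short geometric argument.

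For Part (1), I would verify biorthogonality by the one-line calculation $\langle v_j, v_k^*\rangle = \langle De_j,(D^{-1})^*e_k\rangle = \langle D^{-1}De_j,e_k\rangle = \langle e_j,e_k\rangle = \delta_{j,k}$, and observe that completeness of $(v_k)$ is immediate from the invertibility of $D$, whose columns then span $\mathbb{R}^n$.

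For Part (2), I would pick any orthonormal basis of $H$ and express the linearly independent vectors $(v_k)_{k=1}^n$ as the columns of an $n\times n$ matrix $D$. Linear independence makes $D$ invertible, so Part (1) produces biorthogonal duals $v_k^* := (D^{-1})^*e_k$ and shows the system is complete. For uniqueness, I would observe that if $(v_k,\tilde v_k^*)_{k=1}^n$ were another biorthogonal system, then $\langle v_j, v_k^*-\tilde v_k^*\rangle = 0$ for every $j$; since $(v_j)_{j=1}^n$ spans $H$, this forces $v_k^* = \tilde v_k^*$.

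Part (3) is the only part carrying real geometric content, so it is where I would spend the most care. Let $E_k:=\mathrm{span}(v_j)_{j\neq k}$; biorthogonality gives $v_k^*\perp E_k$, so $v_k^*\in E_k^\perp$, which by completeness is a one-dimensional subspace. Decompose $v_k = u+w$ with $u\in E_k$ and $w\in E_k^\perp$, so that $\|w\|_2 = \mathrm{dist}(v_k,E_k)$. Since $v_k^*\perp u$, the normalization $\langle v_k,v_k^*\rangle = 1$ reduces to $\langle w,v_k^*\rangle = 1$; and since $w$ and $v_k^*$ lie in the same one-dimensional space $E_k^\perp$, Cauchy--Schwarz becomes an equality and yields $\|w\|_2\cdot\|v_k^*\|_2 = 1$, which rearranges to the claimed formula $\|v_k^*\|_2 = 1/\mathrm{dist}(v_k,E_k)$. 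No step is expected to be a serious obstacle; the only thing to be careful about is keeping the scalar field (real vs. complex) consistent when interpreting $(D^{-1})^*$ and the inner product.
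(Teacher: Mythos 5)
Your proof is correct and follows essentially the same route as the paper's. Part (1) is the identical one-line computation. In Part (2) you reduce to Part (1) by coordinatizing $H$ and then verify uniqueness directly from the spanning property; the paper instead cites the abstract existence and uniqueness of a dual basis, but the content is the same. In Part (3) you perform the orthogonal decomposition $v_k = u + w$ with $w \in E_k^{\perp}$ explicitly and then pair with $v_k^*$, whereas the paper writes the equivalent expansion $v_k = \sum_{i\neq k} a_i v_i + \mathrm{dist}(v_k,E_k)\,v_k^*/\|v_k^*\|_2$ and pairs with $v_k^*$; these are two phrasings of the same calculation. One small point worth making explicit, which neither account states: completeness plus biorthogonality forces the $(v_j)$ to be linearly independent, so $E_k$ has dimension $n-1$, $E_k^\perp$ is one-dimensional, and $w \neq 0$; this is what licenses the Cauchy--Schwarz-equality step and guarantees the distance is nonzero.
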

The next Lemma tells us the relation between $Y_k^*$ and $X_k^*$ for $k\ge l+1$.
\begin{lemma}\label{newBOS}
$(X_k,Y_k^*)_{k=l+1}^n$ is a complete biorthogonal system in $H_l$.
\end{lemma}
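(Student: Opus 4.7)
The plan is to deduce the lemma directly from Proposition \ref{BOSp}(1) together with the self-adjointness of the orthogonal projection $P_l$. Since $A$ is assumed a.s.\ invertible, part (1) of that proposition applies to the matrix $A$ with columns $X_k = Ae_k$ and dual vectors $X_k^* = (A^{-1})^* e_k$, giving a complete biorthogonal system $(X_k, X_k^*)_{k=1}^n$ in $\mathbb{R}^n$; in particular $\langle X_j, X_k^*\rangle = \delta_{j,k}$ for all $j, k$.

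First, I would check that each $Y_k^* = P_l X_k^*$ actually lies in $H_l$, which is immediate since $P_l$ is the orthogonal projection onto $H_l$. Next, for the biorthogonality relations, fix $j, k \in \{l+1, \ldots, n\}$. Since $X_j \in H_l$ by the definition $H_l = \mathrm{span}(X_i)_{i>l}$, we have $P_l X_j = X_j$, and using self-adjointness of $P_l$,
\[
\langle X_j, Y_k^*\rangle = \langle X_j, P_l X_k^*\rangle = \langle P_l X_j, X_k^*\rangle = \langle X_j, X_k^*\rangle = \delta_{j,k}.
\]
Finally, completeness is immediate: by definition $H_l = \mathrm{span}(X_k)_{k=l+1}^n$, so the $X_k$'s already span the ambient space $H_l$ in which we work.

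There is no substantive obstacle here; the only thing to be mindful of is that the biorthogonal system lives in $H_l$ (a subspace), not in $\mathbb{R}^n$, so the dual vectors must be projected down via $P_l$ to lie in the correct space while preserving the pairings $\langle X_j, \cdot\rangle$ for $j > l$. The invertibility assumption on $A$ guarantees that $X_{l+1}, \ldots, X_n$ are linearly independent and that $X_k^*$ is well-defined, so $\dim H_l = n-l$ matches the number of pairs in the system, consistent with Proposition \ref{BOSp}(2).
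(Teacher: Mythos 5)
Your proof is correct. You and the paper both reduce the biorthogonality of $(X_k,Y_k^*)_{k=l+1}^n$ to that of the original system $(X_k,X_k^*)_{k=1}^n$ from Proposition \ref{BOSp}(1), but by slightly different algebraic moves: you transfer $P_l$ to the other argument via self-adjointness and use $P_lX_j=X_j$ for $j>l$, whereas the paper decomposes $Y_k^*=X_k^*+\sum_{j\le l}a_{kj}X_j^*$ using the identification $\ker(P_l)=H_l^\perp=\mathrm{span}(X_j^*)_{j\le l}$ and lets the cross terms vanish by biorthogonality. Your route is a bit more economical, since it avoids having to observe that $H_l^\perp$ is spanned by $(X_j^*)_{j\le l}$ and needs only the generic fact that orthogonal projections are self-adjoint; both arguments handle completeness by the same dimension count $\dim H_l=n-l$.
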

\begin{proof}
By definition, for all $k\ge l+1$
$$
Y_k^*-X_k^*\in \ker(P_l)=H_l^{\perp}=\span(X_j^*)_{j\le l}.
$$
So we have, for all $k\ge l+1$
$$
Y_k^*= X_k^* + \sum_{j=1}^{l}a_{kj} X_j^*, {\rm \  for \  some\ } a_{jk}\in \mathbb{R}, j=1,2,\cdots,l.
$$
By the orthogonality, we have for all $k,i\ge l+1$
$$
\langle Y_k^*, X_i \rangle = \langle X_k^*, X_i \rangle + \sum_{j=1}^{l}a_{kj} \langle X_j^*, X_i \rangle =  \langle X_k^*, X_i \rangle =\delta_{k,i}.
$$
Thus the biorthogonality is proved. The competeness follows since $\dim(H_l)=n-l$.
\end{proof}
In view of the uniqueness of Part 2 of Proposition \ref{BOSp}, Lemma \ref{newBOS} has the following crucial consequence.
\begin{corollary}\label{LIofnewBOS}
The system of vectors $(Y_k^*)_{k=l+1}^n$ is uniquely determined by the system $(X_k)_{k=l+1}^n$. In particular, the random vector system $(Y_k^*)_{k=l+1}^n$ is independent with random vector system $(X_k)_{k=1}^l$.
\end{corollary}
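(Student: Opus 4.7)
The plan is to combine Lemma \ref{newBOS} with the uniqueness clause of Part 2 of Proposition \ref{BOSp}. The conceptual point is that, although $Y_k^* = P_l X_k^*$ is originally defined via $(A^{-1})^*$ and therefore \emph{a priori} appears to depend on every column of $A$, Lemma \ref{newBOS} has just characterized $(Y_k^*)_{k=l+1}^n$ intrinsically as the biorthogonal dual of $(X_k)_{k=l+1}^n$ inside $H_l$. Since such a dual is unique, the whole system $(Y_k^*)_{k=l+1}^n$ must in fact be a function of $(X_k)_{k=l+1}^n$ alone.

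Concretely, I would proceed as follows. First note that $H_l = \operatorname{span}(X_j)_{j>l}$ clearly depends only on $(X_k)_{k=l+1}^n$. By the almost sure invertibility of $A$ (justified earlier via the $A+\varepsilon G$ perturbation trick), these $n-l$ columns are a.s. linearly independent and $\dim H_l = n-l$. Part 2 of Proposition \ref{BOSp} then produces a \emph{unique} system $(v_k^*)_{k=l+1}^n \subset H_l$ biorthogonal to $(X_k)_{k=l+1}^n$. Lemma \ref{newBOS} says $(Y_k^*)_{k=l+1}^n$ satisfies exactly this biorthogonality relation in $H_l$, so by uniqueness $Y_k^* = v_k^*$ for each $k \ge l+1$. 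Hence every $Y_k^*$ is a measurable function of $(X_{l+1},\ldots,X_n)$ alone, which is the first assertion.

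For the independence claim, I would invoke that the columns $X_1,\ldots,X_n$ of $A$ are independent random vectors since they are built from disjoint blocks of i.i.d. entries. Consequently $(X_k)_{k=1}^l$ is independent of $(X_k)_{k=l+1}^n$; since $(Y_k^*)_{k=l+1}^n$ has just been shown to be a (measurable) function of the latter, it is automatically independent of $(X_k)_{k=1}^l$. There is no real obstacle: all the substantive work is already contained in Lemma \ref{newBOS} and Proposition \ref{BOSp}, and the only point worth spelling out carefully is why the a.s. invertibility of $A$ is enough to apply the uniqueness statement, together with a routine measurability check on the assignment sending a linearly independent tuple to its biorthogonal dual.
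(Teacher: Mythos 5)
Your argument is exactly the paper's: the paper derives Corollary \ref{LIofnewBOS} "in view of the uniqueness of Part 2 of Proposition \ref{BOSp}" applied to Lemma \ref{newBOS}, which is precisely the uniqueness-of-dual-system reasoning you give, together with the standard observation that independence of the column blocks carries over to any measurable function of them. Your write-up is simply a more explicit version of the same proof, with the a.s. invertibility and measurability points spelled out.
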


\subsection{Concentration thereoms}

The major tools of our proof come from measure concentration theory. Here we list the concentration theorems that will be used in the proof.

The first theorem is a concentration property of sub-gaussian random vectors.
\begin{theorem}\label{subgVconc}
Let $D$ be a fixed $m\times n$ matrix. Consider a random vector $Z$ with independent entries that have mean 0, variance greater than 1, and uniformly bounded by $K$ in $\psi_2$ norm. Then, for any $t\ge0$, we have
$$
\P( |\V DZ\V_2-M|>t )\le 2\exp\left( - \frac{ct^2}{\V D\V^2}\right)
$$
where $M=(\E \V DZ \V_{2}^{2})^{1/2}$ which satisfies $\V D \V_{{\rm HS}} \le M \le K \V D\V_{{\rm HS}}$,
and $c=c(K)$ is polynomial in $K$.
\end{theorem}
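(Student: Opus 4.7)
The plan is to split the theorem into two pieces: first, bounding the mean size $M$, and second, proving sub-gaussian concentration of $\|DZ\|_2$ around $M$.

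First I would compute $M^2 = \E\|DZ\|_2^2$ directly. Using independence and the mean-zero assumption,
\[
M^2 \;=\; \E\sum_i\Bigl(\sum_j D_{ij}Z_j\Bigr)^{\!2} \;=\; \sum_j \sigma_j^2 \,\|De_j\|_2^2,
\]
where $\sigma_j^2=\mathrm{Var}(Z_j)$. The hypothesis $\sigma_j^2\ge 1$ yields the lower bound $M\ge \|D\|_\HS$. For the upper bound, $\|Z_j\|_{\psi_2}\le K$ forces $\sigma_j^2\le cK^2$ for an absolute constant $c$, which in turn gives $M\le c'K\,\|D\|_\HS$, matching the stated two-sided bound after absorbing $c'$.

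For the concentration, I would use that the map $f(z):=\|Dz\|_2$ is convex and $\|D\|$-Lipschitz: convexity because $f$ is the composition of the Euclidean norm with the linear map $D$, and the Lipschitz bound because $|\,\|Dz\|_2-\|Dy\|_2\,| \le \|D(z-y)\|_2 \le \|D\|\cdot\|z-y\|_2$. Applying the standard concentration inequality for convex Lipschitz functions of independent $\psi_2$ random variables (Talagrand's convex concentration inequality in the sub-gaussian setting) then gives
\[
\P\bigl(|f(Z)-\E f(Z)|>s\bigr) \;\le\; 2\exp\!\bigl(-c(K)\,s^2/\|D\|^2\bigr),
\]
with $c(K)$ polynomial in $K$. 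The final step is to replace $\E f(Z)$ with $M$: the displayed tail bound yields $\mathrm{Var}(f(Z))\le C(K)\|D\|^2$, hence
\[
|M-\E f(Z)|^2 \;\le\; \mathrm{Var}(f(Z)) \;\le\; C(K)\|D\|^2,
\]
so that the two centerings differ by $O(K\,\|D\|)$, which is absorbed into the constants in the statement (and for small $t$ the conclusion is trivial since the right-hand side exceeds $1$).

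The main obstacle is the convex-concentration step with the correct polynomial dependence on $K$; in particular, one must pass from the cleaner median-centered version of Talagrand's inequality to the mean-centered version used above. An alternative, more hands-on route is to apply Hanson-Wright to the quadratic form $Z^{\!\top}(D^{\!\top}D)Z$ to control $|\,\|DZ\|_2^2-M^2\,|$, using $\|D^{\!\top}D\|_\HS\le\|D\|\,\|D\|_\HS$ and $\|D^{\!\top}D\|=\|D\|^2$, and then convert to a bound on $|\,\|DZ\|_2-M\,|$ via the identity $\|DZ\|_2-M=(\|DZ\|_2^2-M^2)/(\|DZ\|_2+M)$, splitting on whether $\|DZ\|_2\ge M/2$. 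Either approach requires cleaning up a mixed Bernstein-type tail (from Hanson-Wright) or the mean-versus-median gap (from Talagrand) to produce the pure sub-gaussian tail with constant $c(K)$ stated in the theorem.
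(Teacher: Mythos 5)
Your second, Hanson--Wright route is the one the paper actually relies on: the paper gives no proof of this statement, only a pointer to Rudelson and Vershynin's Hanson--Wright paper, whose Theorem~2.1 \emph{is} this statement (stated there with exact unit variance; allowing variance $\ge 1$ and $\psi_2$-norm $\le K$ changes nothing in the argument). Your first route, however, has a genuine gap: Talagrand's convex concentration inequality is a theorem about product measures on a bounded cube, and there is no off-the-shelf version for unbounded $\psi_2$ coordinates that you can ``simply invoke.'' The known extensions (truncation arguments, Adamczak-type bounds) either cost extra work or deliver mixed tails, at which point you have gained nothing over running Hanson--Wright directly. So you should commit to the Hanson--Wright route.

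Two refinements to that route. First, the upper bound $M\le K\|D\|_\HS$ needs no absolute constant: with the paper's normalization $\E\exp\bigl(Z_j^2/\|Z_j\|_{\psi_2}^2\bigr)\le 2$ and $e^x\ge 1+x$, one gets $\E Z_j^2\le \|Z_j\|_{\psi_2}^2\le K^2$ outright. Second, in the conversion from squares to the norm you should use the stronger deviation threshold
\[
\bigl|\|DZ\|_2-M\bigr|>t \;\Longrightarrow\; \bigl|\|DZ\|_2^2-M^2\bigr|\ge \max(t^2,\,tM),
\]
(on the upper side $(a-M)(a+M)\ge t(2M+t)\ge\max(t^2,2tM)$; on the lower side $t\le M$ forces $\ge tM=\max(t^2,tM)$), rather than the bound $\ge tM$ alone that your displayed identity gives. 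Feeding $u=\max(t^2,tM)$ into the Hanson--Wright exponent $\min\bigl(u^2/(K^4\|D^\top D\|_\HS^2),\,u/(K^2\|D\|^2)\bigr)$ and using $\|D^\top D\|_\HS\le\|D\|\,\|D\|_\HS\le\|D\|\,M$ collapses both cases $t\le M$ and $t>M$ to the pure sub-gaussian exponent $t^2/(K^4\|D\|^2)$; without the $t^2$ option in the threshold, the case $t>M$ leaves a sub-exponential tail that does not match the statement. Your handling of the mean-versus-$M$ discrepancy via $|M-\E\|DZ\|_2|\le\sqrt{\mathrm{Var}(\|DZ\|_2)}$ is correct as written.
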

This result can be deduced from the Hanson-Wright inequality.
A modern proof of the Hanson-Wright inequality and a deduction of the above Theorem \ref{subgVconc} are discussed in \cite{HansonWrightineq}.

Sub-gaussian concentration paired with a standard covering argument yields the following result on norms of random matrices, see \cite{HansonWrightineq}.
\begin{theorem}\label{subgOPconc}
{\rm (Products of random and deterministic matrices). }Let $D$ be a fixed $m\times N$ matrix, and let $G$ be an $N\times k$ random matrix with independent entries that satisfy $\E G_{ij}=0, \E G_{ij}^2 \ge 1$ and $\V G_{ij}\V_{\psi_2} \le K$. Then for any $s,t\ge 1$ we have
$$
\P\{\V DG\V> C (s\V D \V_{\HS} + t\sqrt{k} \V D\V ) \}\le 2\exp(-s^2r-t^2k)
$$
Here $r=\V D\V_{\HS}^2/ \V D\V_2^2$ is the stable rank of $D$, and $C=C(K)$ is a polynomial in $K$.
\end{theorem}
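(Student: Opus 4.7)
The plan is to combine Theorem \ref{subgVconc} with a standard $\varepsilon$-net argument on the sphere. Write $\V DG \V = \sup_{x \in S^{k-1}} \V DGx \V_2$. For a fixed $x \in S^{k-1}$, set $Z := Gx \in \R^N$. Since the rows of $G$ are independent, so are the coordinates $Z_i = \sum_{j=1}^{k} x_j G_{ij}$. Each $Z_i$ has mean zero, variance $\sum_j x_j^2 \E G_{ij}^2 \ge \V x \V_2^2 = 1$, and $\psi_2$-norm bounded by $cK \V x \V_2 = cK$ by the standard additivity of $\psi_2$-norms for independent mean-zero sums. Applying Theorem \ref{subgVconc} to the deterministic matrix $D$ and the random vector $Z$, and using $M := (\E \V DZ \V_2^2)^{1/2} \le C(K)\, \V D \V_{\HS}$, one gets
\[
\P\bigl( \V DGx \V_2 > C(K)\,\V D \V_{\HS} + u \bigr) \le 2 \exp\!\bigl( -c(K)\, u^2 / \V D \V^2 \bigr), \qquad u \ge 0.
\]

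Next, fix a $\tfrac{1}{2}$-net $\mathcal{N} \subset S^{k-1}$ with $|\mathcal{N}| \le 5^k$, giving the familiar inequality $\V DG \V \le 2 \max_{x \in \mathcal{N}} \V DGx \V_2$. A union bound over $\mathcal{N}$ then produces
\[
\P\Bigl( \V DG \V > 2C(K)\,\V D \V_{\HS} + 2u \Bigr) \le 2 \cdot 5^k \exp\!\bigl( -c(K)\, u^2 / \V D \V^2 \bigr).
\]
Now set $u := s \V D \V_{\HS} + t \sqrt{k}\, \V D \V$. Since $u^2 \ge s^2 \V D \V_{\HS}^2 + t^2 k\, \V D \V^2$, the exponent is bounded by
\[
-c(K)\bigl( s^2 r + t^2 k \bigr) + k \ln 5,
\]
where $r = \V D \V_{\HS}^2 / \V D \V^2$ is the stable rank of $D$. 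Because $t \ge 1$, the entropy term $k \ln 5$ is absorbed into $c(K)\, t^2 k$ by shrinking the constant, and the additive $2C(K)\,\V D \V_{\HS}$ in the tail is absorbed into the coefficient of $s \V D \V_{\HS}$ since $s \ge 1$. This yields exactly the claimed bound $\P(\V DG \V > C(s \V D \V_{\HS} + t \sqrt{k}\, \V D \V)) \le 2 \exp(-s^2 r - t^2 k)$.

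The main step to watch is the reduction itself: one must check that for every fixed $x \in S^{k-1}$ the coordinates of $Gx$ remain independent, have variance at least $1$, and retain a uniform $\psi_2$-norm bound, so that Theorem \ref{subgVconc} genuinely applies with $D$ unchanged. Beyond this, the argument is the textbook net-plus-concentration recipe; the remaining bookkeeping is the splitting of $u$ into an $s\V D \V_{\HS}$ component and a $t\sqrt{k}\,\V D \V$ component, which is what makes the exponent pick up $s^2 r + t^2 k$ rather than a mixed expression.
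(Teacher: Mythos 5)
Your argument is the right one, and it is precisely the argument the paper intends: this theorem is not proved in the paper but is cited from \cite{HansonWrightineq} with the remark that it follows from ``sub-gaussian concentration paired with a standard covering argument.'' The reduction to a fixed $x\in S^{k-1}$, the verification that $Z=Gx$ has independent coordinates with variance at least $1$ and $\psi_2$-norm $\lesssim K$, the application of Theorem \ref{subgVconc} with the same deterministic $D$, and the $\tfrac12$-net union bound are all exactly the standard route.

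One step is stated too loosely. After the union bound you have $\P\bigl(\V DG\V > 2C(K)\V D\V_{\HS}+2u\bigr) \le 2\cdot 5^k\exp\bigl(-c(K)u^2/\V D\V^2\bigr)$, and with $u=s\V D\V_{\HS}+t\sqrt{k}\,\V D\V$ the exponent is $-c(K)(s^2r+t^2k)+k\ln 5$. You say the entropy term is ``absorbed into $c(K)t^2k$ by shrinking the constant,'' but if $c(K)\le \ln 5$ (which is entirely possible, since $c(K)$ comes from Hanson--Wright and may be small) no amount of shrinking a positive constant produces the bound $-s^2r-t^2k$ demanded by the statement. The correct fix is to enlarge $u$: set $u=L\bigl(s\V D\V_{\HS}+t\sqrt{k}\,\V D\V\bigr)$ with $L\ge\sqrt{(1+\ln 5)/c(K)}$. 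Then $c(K)u^2/\V D\V^2 \ge c(K)L^2(s^2r+t^2k)\ge s^2r+t^2k+k\ln 5$ (using $t\ge 1$, so $t^2k\ge k$), which kills the $5^k$ factor exactly. The enlarged $L$ and the additive $2C(K)\V D\V_{\HS}$ term are then absorbed into the constant $C$ of the theorem using $s,t\ge 1$, as you note. With this correction the proof is complete and matches the approach the paper points to.
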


The following result gives the lower bound on the smallest singular value of a rectangular i.i.d. sub-gaussian matrix. This will be used in our proof of Theorem \ref{upperbound}; it can also yield the lower bound in Theorem \ref{mainthm} directly. The proof and extensions of the theorem are discussed in \cite{InvofRM, LOandInvofRM, SMstSvRect, NAtheoryofRM}.

\begin{theorem}\label{leastSVLB}
Let $G$ be an $N\times n$ random matrix, $N\ge n$, whose elements are independent copies of a centered sub-gaussian random variable with unit variance. Then for every $\vv >0$, we have
$$
\P \left( s_n(G) \le \vv \left( \sqrt{N}-\sqrt{n-1}\right)\right)\le (C\vv)^{N-n+1}+e^{-C'N}
$$
where $C,C'>0$ depend (polynomially) only on the sub-gaussian moment $K$.
\end{theorem}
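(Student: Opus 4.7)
The plan is to bound $s_n(G) = \inf_{x\in S^{n-1}} \|Gx\|_2$ from below via the Rudelson--Vershynin sphere decomposition. Fix small parameters $\delta,\rho > 0$ (depending on $K$) and split the sphere into the \emph{compressible} set $\mathrm{Comp}(\delta,\rho)$ of unit vectors lying within Euclidean distance $\rho$ of a $\delta n$-sparse vector, together with the complementary \emph{incompressible} set. The compressible part will produce the additive $e^{-C'N}$ term in the conclusion; the incompressible part, the $(C\vv)^{N-n+1}$ term.

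For the compressible regime the argument is a net-plus-approximation. The set $\mathrm{Comp}(\delta,\rho)$ admits an $\rho$-net $\mathcal{N}$ of cardinality at most $\binom{n}{\delta n}(C/\rho)^{\delta n} \leq \exp(c_0 n)$, with $c_0$ made arbitrarily small by choosing $\delta,\rho$ small. For a fixed $x_0 \in \mathcal{N}$, Theorem~\ref{subgVconc} applied with $D = I_N$ gives two-sided concentration of $\|Gx_0\|_2$ around $\sqrt{N}$ with subgaussian tails at scale one, so $\P\{\|Gx_0\|_2 \leq c\sqrt{N}\} \leq 2 e^{-c_1 N}$. Pairing this with the operator-norm bound $\|G\|\leq C\sqrt{N}$ from Theorem~\ref{subgOPconc}, a union bound over $\mathcal{N}$ followed by approximation yields $\inf_{x\in\mathrm{Comp}} \|Gx\|_2 \gtrsim \sqrt{N} \geq \sqrt{N}-\sqrt{n-1}$ off an event of probability $e^{-C'N}$.

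The incompressible case reduces to a distance problem. Any $x \in \mathrm{Incomp}(\delta,\rho)$ has at least $cn$ coordinates with $|x_k| \geq c'\rho/\sqrt{n}$, and the pointwise inequality $\|Gx\|_2 \geq |x_k|\,\mathrm{dist}(G_k, H_k)$ with $H_k = \mathrm{span}(G_j : j \ne k)$ implies that on the event $\{\inf_{x\in\mathrm{Incomp}} \|Gx\|_2 \leq \vv(\sqrt{N}-\sqrt{n-1})\}$ some column must satisfy $\mathrm{dist}(G_k, H_k) \leq C_\rho \vv\sqrt{d}$, where $d := N - n + 1$ (after using $\sqrt{N} - \sqrt{n-1} \asymp d/\sqrt{N}$ and tuning $\rho$ appropriately). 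Averaging over $k$ reduces the task to the single-column small-ball estimate
\[
\P\bigl\{\|P_{H_k^\perp} G_k\|_2 \leq \vv\sqrt{d}\bigr\} \leq (C\vv)^d + e^{-c'N},
\]
conditional on $H_k$; here $G_k$ has i.i.d.\ subgaussian unit-variance entries independent of the $d$-dimensional subspace $H_k^\perp$.

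The main obstacle is producing the sharp exponent $d$ in this small-ball estimate: one needs the factor $(C\vv)^d$, whereas Theorem~\ref{subgVconc} alone gives only a Gaussian tail $e^{-cd}$. Nondegeneracy of subgaussian entries guarantees a scalar Levy estimate $\L(G_{ik}, 1) \leq 1 - c(K) < 1$, which admits a Rogozin-type tensorization along an orthonormal basis of $H_k^\perp$ provided that basis is sufficiently \emph{spread}; basis vectors concentrated on a small coordinate set are handled separately by a least-common-denominator style argument on the normal directions, and their bad event is of probability $e^{-c'N}$. Plugging the resulting distance bound back into the union bound over $k$ and combining with the compressible estimate yields the asserted inequality.
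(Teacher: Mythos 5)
This theorem is quoted from Rudelson and Vershynin \cite{SMstSvRect}; the paper cites it rather than proving it, so there is no internal proof to compare against. Your outline follows the general framework of \cite{SMstSvRect} (net argument for the tall and compressible regimes, a reduction to a column--subspace distance for the incompressible regime, and an LCD-based small-ball estimate for that distance), but the incompressible reduction as you state it contains a scaling error that destroys the claimed exponent once $d:=N-n+1$ is allowed to grow.

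Concretely, from $\|Gx\|_2\ge |x_k|\,\mathrm{dist}(G_k,H_k)$ and $|x_k|\gtrsim\rho/\sqrt n$ on many coordinates, the event $\inf_{\mathrm{Incomp}}\|Gx\|_2<\vv(\sqrt N-\sqrt{n-1})$ forces $\mathrm{dist}(G_k,H_k)\lesssim \vv\sqrt n(\sqrt N-\sqrt{n-1})/\rho\asymp \vv d/\rho$ in the nearly-square regime $n\asymp N$ (which is the only regime where the incompressible argument is needed), not $C_\rho\vv\sqrt d$ as you assert. Inserting the correct threshold into Theorem~\ref{disttoSubsp}, which controls $\P(\mathrm{dist}(G_k,H_k)<t\sqrt d)\le (Ct)^d+e^{-cN}$, produces $\bigl(C\vv\sqrt d/\rho\bigr)^{d}$ rather than $(C\vv)^d$; the parasitic $d^{d/2}$ cannot be absorbed into the constant, and unwinding it shows your argument only yields $s_n(G)\gtrsim \sqrt{d/N}$ with the stated probability, weaker than the sharp $s_n(G)\gtrsim d/\sqrt N$ by a factor $\sqrt d$. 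For bounded $d$ (in particular the square case) the discrepancy is harmless, but it fails to give Theorem~\ref{leastSVLB} at the correct scale when $d$ grows; obtaining the full rate requires the more refined sphere decomposition carried out in \cite{SMstSvRect}. Separately, your sketch of the conditional small-ball bound for $\|P_{H_k^\perp}G_k\|_2$ (Rogozin tensorization along a spread basis plus an LCD patch) is essentially a restatement of Theorem~\ref{disttoSubsp} itself, which is the main technical content of \cite{SMstSvRect}, so this ingredient is being assumed rather than derived.
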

As one step towards the above least singular value bound, the following distance to a random subspace theorem was proved by M. Rudelson and R. Vershynin \cite{SMstSvRect}:
\begin{theorem}\label{disttoSubsp}
{\rm (Distance to a random subspace).} Let $Z$ be a vector in $\R^N$ whose coordinates are independent and identically distributed centered sub-gaussian random variables with unit variance. Let $H$ be a random subspace in $\R^N$ spanned by $N-m$ vectors, $0<m< \tilde cN$, whose coordinates are independent and identically distributed centered sub-gaussian random variables with unit variance, independent of $Z$. Then, for every $v\in \R ^N$ and every $\vv >0$, we have
$$
\P ({\rm dist}(Z,H+v) < \vv \sqrt{m})\le (C\vv )^m +e^{-cN},
$$
where $C,c, \tilde c$ depend only on the sub-gaussian moments.
\end{theorem}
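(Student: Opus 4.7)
The plan is to write the distance as the Euclidean norm of a projection onto $H^\perp$, condition on the (independent) random subspace $H$, and combine a structural dichotomy for the null space with an $m$-dimensional small-ball estimate. Concretely, choose an $N\times m$ matrix $W$ whose columns form an orthonormal basis of $H^\perp$, measurable with respect to $H$. Then
$$
\mathrm{dist}(Z,H+v)=\mathrm{dist}(Z-v,H)=\V W^{\top}(Z-v)\V_2,
$$
and since $H$ is independent of $Z$, it suffices to prove that on some event $\mathcal{E}_0$ depending only on $H$, with $\P(\mathcal{E}_0^{c})\le e^{-cN}$, one has the conditional small-ball estimate
$$
\P\bigl(\V W^{\top}Z-a\V_2<\vv\sqrt{m}\,\big|\,H\bigr)\le (C\vv)^m
$$
for every deterministic $a\in\R^m$. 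Specializing $a=W^{\top}v$ then yields the theorem.

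Second, I would define $\mathcal{E}_0$ by requiring that $H^\perp$ contain no compressible unit vector. Writing $H$ as the row span of an $(N-m)\times N$ subgaussian matrix $M$ (whose smallest singular value on compressible vectors is controlled using ideas behind Theorem \ref{leastSVLB}), the standard Rudelson--Vershynin compressible/incompressible dichotomy applied to $\ker M=H^\perp$ gives $\P(\mathcal{E}_0^{c})\le e^{-cN}$, provided $m\le\tilde cN$. On $\mathcal{E}_0$, every unit vector $y\in H^\perp$ has at least $c_1 N$ coordinates of magnitude $\ge c_2/\sqrt{N}$; in particular the rows of any orthonormal basis $W$ of $H^\perp$ are spread out.

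Third, on $\mathcal{E}_0$ I would establish the small-ball bound. For a single incompressible $y\in H^\perp$, the spread of $y$ together with the subgaussian tails of the $Z_i$ yields, via Rogozin's inequality applied to the independent summands $y_i Z_i$, the scalar estimate $\L(\langle y,Z\rangle,\vv)\le C\vv$. To upgrade to dimension $m$, reveal the coordinates $\langle w_j,Z\rangle$ of $W^{\top}Z$ sequentially: conditioning on the first $j-1$ restricts $Z$ to an affine subspace of codimension $j-1$, along which the residual coefficient vector of $\langle w_j,Z\rangle$ still inherits a macroscopic spread from the incompressibility of $H^\perp$, so the scalar bound continues to apply. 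Multiplying the $m$ factors gives $(C\vv)^m$.

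The main obstacle is precisely this tensorization: the coordinates of $W^{\top}Z$ are linear combinations of the same $Z_i$'s and hence not independent, so the product bound has to be argued via conditional preservation of spread rather than invoked directly. An arguably cleaner alternative route is through characteristic functions: bound the small-ball probability by a Fourier smoothing of $\mathbf{1}_{B(\vv\sqrt{m})}$, factor $|\E\exp(i\langle W\xi,Z\rangle)|\le\prod_i|\phi(\langle W_{i\cdot},\xi\rangle)|$ using independence of the coordinates of $Z$ (with $\phi$ the common characteristic function), and then use incompressibility on $\mathcal{E}_0$ to guarantee that a macroscopic fraction of the coefficients $\langle W_{i\cdot},\xi\rangle$ have magnitude $\ge c\V\xi\V/\sqrt{N}$, forcing $|\phi|$ away from $1$ and producing the required $(C\vv)^m$ decay upon integration in $\xi$. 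Either route is technical but standard; it is the heart of the argument, while the reduction and the dichotomy steps are essentially bookkeeping.
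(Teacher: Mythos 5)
This is a background theorem that the paper cites from Rudelson--Vershynin \cite{SMstSvRect} without proof, so there is no ``paper's own proof'' to compare against; I will assess your sketch against what is actually required to establish the statement.

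Your high-level reduction (pass to $\V W^{\top}(Z-v)\V_2$, condition on $H$, split off a bad event depending only on $H$) is the right frame, and so is the observation that $W\xi/\V\xi\V_2 \in S_{H^\perp}$ should be incompressible. However, the core of your argument has a real gap: incompressibility of $H^\perp$ alone cannot deliver the estimate $(C\vv)^m$ \emph{for all} $\vv>0$. Incompressibility of a unit vector $y$ yields, via Littlewood--Offord/Rogozin-type bounds, only a single-scale statement $\L(\langle y,Z\rangle,\,c/\sqrt N)\le C/\sqrt N$; by the elementary chaining $\L(\cdot,kt)\le k\,\L(\cdot,t)$ this upgrades to $\L(\langle y,Z\rangle,\vv)\le C\vv$ only for $\vv\gtrsim 1/\sqrt N$. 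For smaller $\vv$ (in particular in the regime $e^{-cN/m}\le\vv\ll 1/\sqrt N$, which is exactly where $(C\vv)^m$ still dominates $e^{-cN}$), the bound $\L(\langle y,Z\rangle,\vv)\le C\vv$ can fail outright for lattice-valued entries such as Bernoulli $\pm1$, where the characteristic function $\phi$ returns to modulus $1$ at the period. What is needed, and what Rudelson--Vershynin actually prove, is an arithmetic-structure statement: with probability $\ge 1-e^{-cN}$ the least common denominator of $H^\perp$ is exponentially large, and the $m$-dimensional small-ball bound is then proved in terms of that LCD. That structural step is the heart of their proof and is not a ``bookkeeping'' replacement for your event $\mathcal{E}_0$ --- it is a substantially stronger (and harder) property than mere absence of compressible vectors.

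Your two proposed tensorization routes also do not close this gap. The sequential-revelation route is not sound as stated: conditioning $Z$ on the values of the linear forms $\langle w_1,Z\rangle,\dots,\langle w_{j-1},Z\rangle$ does not restrict $Z$ to a product measure on a coordinate affine subspace --- the conditional law of $(Z_1,\dots,Z_N)$ is no longer a product, so the scalar small-ball bound (which uses independence of the coordinates) cannot be re-applied to the $j$-th form. The Fourier route via $\prod_i|\phi(\langle W_{i\cdot},\xi\rangle)|$ is closer to what is actually done, but $|\phi|$ is forced away from $1$ only when $\langle W_{i\cdot},\xi\rangle$ is both bounded away from $0$ \emph{and} away from the lattice periods of the entry distribution; controlling the resonant contribution in the Esseen integral over a ball of radius $\sim 1/\vv$ is precisely where the LCD of $H^\perp$ enters. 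Without it, the integration in $\xi$ does not yield $(C\vv)^m$ uniformly down to $\vv\sim e^{-cN/m}$.
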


M. Rudelson and R. Vershynin have recently proved the following results for small ball probability of a linear image of high dimensional distributions \cite{SBprobofLIofHD} (see also \cite{ShparbdMargDens}).
\begin{theorem}\label{ConcofProj}
{\rm (Concentration function of projections.)} Consider a random vector $Z=(Z_1,\cdots,Z_n)$ where $Z_i$ are real-valued independent random variables. Let $t,p\ge 0$ be such that
$$
\L (Z_i,t)\le p \ {\rm for\ all\ } i=1,\cdots , n
$$
Let $P$ be an orthogonal projection in $\R^n$ onto a $d$-dimensional subspace. Then
$$
\L(PZ,t\sqrt{d})\le (cp)^d.
$$
where $c$ is an absolute constant.
\end{theorem}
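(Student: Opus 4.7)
The plan is to use a Fourier-analytic approach based on a multivariate Esseen-type inequality, exploiting the independence of the coordinates of $Z$ to factor the characteristic function into scalar pieces that can be controlled by the hypothesis $\L(Z_j, t) \le p$. After identifying the range $E$ of $P$ with $\R^d$ via an orthonormal basis, I view $Y := PZ$ as a random vector in $\R^d$.

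First I would invoke a multivariate Esseen-type inequality to reduce the problem to bounding an average of $|\phi_Y|$ over a ball in $E$:
$$
\L(Y, t\sqrt{d}) \;\le\; K(d,t,T)\int_{B_E(0,T)} |\phi_Y(\xi)|\, d\xi
$$
with an appropriate explicit constant $K(d,t,T)$. By independence of $Z_1,\ldots,Z_n$,
$$
\phi_Y(\xi) \;=\; \E \exp(i\langle P\xi, Z\rangle) \;=\; \prod_{j=1}^n \phi_{Z_j}\bigl((P\xi)_j\bigr), \qquad \xi\in E.
$$

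Next I would control each scalar factor by symmetrization. Letting $Z_j'$ be an independent copy of $Z_j$, the identity $|\phi_{Z_j}(s)|^2 = \E\cos(s(Z_j-Z_j'))$ together with $1-\cos u \ge c\min(u^2,1)$ gives $|\phi_{Z_j}(s)|^2 \le 1 - c\,\E\min(s^2(Z_j-Z_j')^2, 1)$. Combining with $\P(|Z_j-Z_j'|>t) \ge 1 - \L(Z_j, t) \ge 1-p$ yields $|\phi_{Z_j}(s)|^2 \le \exp\!\bigl(-c(1-p)\min(s^2 t^2,1)\bigr)$, so the product over $j$ gives
$$
|\phi_Y(\xi)|^2 \;\le\; \exp\!\Bigl(-c(1-p)\sum_{j=1}^n \min\bigl((P\xi)_j^2 t^2,\,1\bigr)\Bigr).
$$
Finally, I would integrate over $\xi \in B_E(0,T)$ to extract the factor $(cp)^d$. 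The key geometric fact is that $P$ is an orthogonal projection onto a $d$-dimensional subspace, so $\|P\xi\|_2 = \|\xi\|_2$ for $\xi \in E$; the $d$ degrees of freedom of $\xi$ then force the sum $\sum_j \min((P\xi)_j^2 t^2,1)$ to grow like $d$ on a large portion of the Esseen ball, producing the exponential gain in $d$ after combining with the volume factor $K(d,t,T)$.

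The hard part will be this last integration step, obtaining the sharp exponent $d$ rather than a suboptimal $cd$. It requires a delicate choice of the radius $T$ balancing the Esseen volume factor against the typical size of $\sum_j \min((P\xi)_j^2 t^2, 1)$ over $B_E(0,T)$. The most likely strategy is a stratification of $B_E(0,T)$ according to the sparsity pattern of the coordinates $\bigl((P\xi)_j\bigr)_{j=1}^n$, combining a net argument on these patterns with the product bound above; the principal difficulty is controlling sparse configurations where $P\xi$ concentrates on only a few coordinates, so that the simple product estimate gives only weak suppression and must be supplemented by a separate counting or geometric argument to recover the full factor $p^d$.
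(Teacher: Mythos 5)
The paper does not prove this theorem; it is quoted from Rudelson and Vershynin \cite{SBprobofLIofHD} (see also \cite{ShparbdMargDens}), so there is no internal proof to compare against. Evaluating the proposal on its own terms: the Fourier/Esseen strategy as you have set it up has a fatal gap, and it is not the ``delicate integration'' you flag at the end — it is the symmetrization step itself. The bound
$$
|\phi_{Z_j}(s)|^2 \le \exp\!\bigl(-c(1-p)\min(s^2t^2,1)\bigr)
$$
does not encode the smallness of $p$. In the regime of interest, $p$ is small, so $1-p$ is close to $1$ and the estimate merely says $|\phi_{Z_j}(s)| \le e^{-c'}<1$ once $|s|t\gtrsim 1$; it never produces a factor comparable to $p$. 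Consequently the product bound
$$
|\phi_Y(\xi)|^2 \le \exp\!\Bigl(-c(1-p)\sum_{j}\min\bigl((P\xi)_j^2t^2,1\bigr)\Bigr)
$$
cannot yield $(cp)^d$ no matter how cleverly you integrate. Worse, the Esseen ball for $\L(Y,t\sqrt{d})$ has $\ell^2$-radius of order $1/t$ in the range $E$, and on that ball $t^2\sum_j(P\xi)_j^2=t^2\|\xi\|_2^2\lesssim 1$ because $P$ is an orthogonal projection and $\xi\in E$. So the exponent in your bound is $O(1)$ throughout the relevant integration region, $|\phi_Y|$ is bounded below by a constant there, and the Esseen inequality returns only the trivial $\L(Y,t\sqrt d)\lesssim C^d$. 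Both the $p$-dependence and the sharp exponent $d$ are lost before the ``hard integration step'' is even reached.

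The actual proof is geometric, not Fourier-analytic. In \cite{SBprobofLIofHD} the anti-concentration hypothesis $\L(Z_i,t)\le p$ is first upgraded, by convolving each $Z_i$ with an independent uniform random variable on an interval of length of order $t$, to a genuine bounded-density condition (with density of order $p/t$); the added uniform noise has projection of size $O(t\sqrt d)$ on $E$ and is absorbed into the radius. Then one invokes the sharp bound, due to Livshyts--Paouris--Pivovarov \cite{ShparbdMargDens}, on marginal densities of product measures on a $d$-dimensional subspace, whose proof relies on Ball's theorems on sections and projections of the cube (equivalently Brascamp--Lieb/Barthe inequalities). The sharp constant there is what produces the exponent $d$ without any $\varepsilon$-loss; compare with the anisotropic Theorem \ref{SBofLIofHDdistr}, which only gives exponent $(1-\varepsilon)r(D)$. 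If you wish to salvage a Fourier approach, you would at minimum need a replacement for symmetrization that turns a one-scale small-ball bound into a genuine pointwise decay of $|\phi_{Z_j}|$ proportional to $p$, and no such implication holds in general (lattice-valued $Z_j$ give $\L(Z_j,t)\le p$ with $|\phi_{Z_j}|$ returning to $1$ periodically).
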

In the same paper, Rudelson and Vershynin generalized Theorem \ref{ConcofProj} to general matrices:
\begin{theorem}{\rm (Concentration functions of anisotropic distributions.)}\label{SBofLIofHDdistr}
Consider a random vector $Z$ where $Z_i$ are real-valued independent random variables. Let $t,p\ge 0$ be such that
$$
\L (Z_i,t)\le p \ {\rm for\ all\ } i=1,\cdots , n
$$
Let $D$ be an $m\times n$ matrix and $\vv\in(0,1)$. Then
$$
\L (DZ,t\V D\V_{\HS})\le (c_{\vv}p)^{(1-\vv)r(D)}
$$
where $r(D)=\V D\V_{\HS}^2/ \V D\V_2^2$ and $c_{\vv}$ depend only on $\vv$.
\end{theorem}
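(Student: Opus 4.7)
The plan is to prove Theorem \ref{SBofLIofHDdistr} by a Fourier-analytic small-ball argument of Esseen type, exploiting the independence of the coordinates of $Z$ together with the pointwise bound $\L(Z_j,t)\le p$ to control the characteristic function of $DZ$.

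First, I would invoke the multidimensional Esseen inequality: for any random vector $W$ in $\R^m$ and any $\alpha>0$,
$$
\L(W,\alpha)\le (C\alpha)^m \int_{B(0,c/\alpha)} |\phi_W(\xi)|\,d\xi,
$$
where $\phi_W(\xi)=\E e^{i\langle \xi,W\rangle}$. Applied with $W=DZ$ and $\alpha=t\V D\V_{\HS}$, this reduces the problem to estimating an integral of $|\phi_{DZ}|$ over a ball of radius $c/(t\V D\V_{\HS})$. Since the coordinates $Z_j$ are independent, the characteristic function factors as
$$
\phi_{DZ}(\xi)=\E e^{i\langle D^T\xi,Z\rangle}=\prod_{j=1}^n \phi_{Z_j}\!\bigl((D^T\xi)_j\bigr).
$$

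Next, the assumption $\L(Z_j,t)\le p$ yields, via the standard one-dimensional symmetrization trick $|\phi_{Z_j}(s)|^2=\E\cos(s(Z_j-Z_j'))$, a pointwise bound of the form $|\phi_{Z_j}(s)|^2\le \exp\bigl(-c(1-p)^2\min((st)^2,1)\bigr)$. Substituting into the product gives
$$
|\phi_{DZ}(\xi)|^2 \le \exp\!\left(-c(1-p)^2 \sum_{j=1}^n \min\bigl(t^2 (D^T\xi)_j^2,\,1\bigr)\right).
$$
I would then diagonalize via the singular value decomposition $D=U\Sigma V^T$ and change variables $\eta=\Sigma V^T\xi$, turning the integral into a Gaussian-like integral over an ellipsoid whose semi-axes are proportional to $1/(\sigma_k t\V D\V_{\HS})$. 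Combining the scaling factor $(Ct\V D\V_{\HS})^m$ from Esseen with the Gaussian mass one picks up over the $\sim r(D)$ dominant singular directions is what produces the desired factor $(cp)^{r(D)}$.

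The main obstacle will be this last step: to extract the exponent $(1-\vv)r(D)$ rather than a weaker $r(D)/2$ or $cr(D)$, one must carefully balance the contributions of singular values of different scales, since the truncation $\min(\cdot,1)$ destroys the clean Gaussian structure along directions where $\sigma_k$ is much smaller than a typical scale. The $\vv$ slack is precisely what allows one to discard a thin tail of small singular values (whose contribution to $\V D\V_{\HS}^2$ is at most $\vv \V D\V_{\HS}^2$) and to treat the remaining truncated integrand as effectively Gaussian. Concretely, I would split the singular values at threshold $\sigma_* \sim \vv^{1/2}\V D\V_{\HS}/\sqrt{r(D)}$, apply Theorem \ref{ConcofProj} to the orthogonal projection onto the span of the "large" singular directions as a comparison step, and absorb the contribution of the "small" ones into the $\vv$ correction. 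Once this truncation is executed carefully, the final product over the remaining $(1-\vv)r(D)$ coordinates gives the claimed bound $(c_\vv p)^{(1-\vv)r(D)}$.
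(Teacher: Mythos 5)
The paper does not prove Theorem~\ref{SBofLIofHDdistr}: it is quoted verbatim from Rudelson--Vershynin \cite{SBprobofLIofHD}, so there is no in-paper argument to compare against. Judged on its own, your Esseen-plus-factorization opening is the right kind of tool (the Rudelson--Vershynin paper is indeed Fourier-analytic at its core), but the sketch contains two gaps that I do not see how to repair as written.

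First, the pointwise characteristic-function bound $|\phi_{Z_j}(s)|^2 \le \exp\left(-c(1-p)^2\min((st)^2,1)\right)$ cannot by itself produce a factor of $p$. As $p\to 0$ this bound saturates at the constant $e^{-c}$, whereas the conclusion must tend to $0$ like $p^{(1-\vv)r(D)}$. The $p$-dependence has to enter through an integral estimate such as $\int_{|u|\le 1/t}|\phi_{Z_j}(u)|^2\,du \lesssim p/t$ (obtained via Parseval against a bump of width $\sim 1/t$), and you do not say how this would be combined with the Esseen prefactor and the anisotropic domain of integration. Second, and more seriously, the claim that the discarded small singular values ``contribute to $\V D\V_{\HS}^2$ at most $\vv\V D\V_{\HS}^2$'' is false. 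Take $D$ diagonal with $k$ singular values equal to $1$ and $n-k$ equal to $\delta$, with $(n-k)\delta^2=k$; then $\V D\V_{\HS}^2=2k$, $r(D)=2k$, and for $n$ large $\delta$ is below any fixed threshold $\sigma_*\sim \vv^{1/2}\V D\V_{\HS}/\sqrt{r(D)}$, so the discarded tail contributes $k=\tfrac12\V D\V_{\HS}^2$ --- a constant fraction, not $\vv$. More generally, if you keep only $\lceil(1-\vv)r(D)\rceil$ directions, Markov's inequality forces $\sigma_k$ to be as small as you like relative to $\V D\V_{\HS}/\sqrt{k}$, so a single comparison to a rank-$k$ orthogonal projection (which needs $\sigma_k\sqrt{k}\gtrsim\V D\V_{\HS}$) cannot yield the exponent $(1-\vv)r(D)$; it only gives something like $r(D)/2$. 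The small singular directions must be kept and combined with the large ones, but after the SVD rotation the coordinates of $V^TZ$ are no longer independent, so the product over singular directions does not factor into independent pieces. This is precisely the technical difficulty that the $\vv$-slack in the actual Rudelson--Vershynin argument is designed to handle, and your sketch does not yet contain a mechanism for it.
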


As a special case of Theorem \ref{ConcofProj}, the following corollary usefully controls the concentration function of sums:

\begin{theorem}{\rm (Concentration function of sums.)}\label{ConofSum}
Consider a random vector $Z=(Z_1,\cdots,Z_n)$ where $Z_i$ are real-valued independent random variables. Let $t,p\ge 0$ be such that
$$
\L (Z_i,t)\le p \ {\rm for\ all\ } i=1,\cdots , n
$$
Let $a_1,\cdots,a_n$ be real numbers with $\sum_{j=1}^n a_j^2 =1$. Then
$$
\L\left( \sum_{i=1}^{n} a_i Z_i, t \right)\le cp.
$$
where $c$ is an absolute constant.
\end{theorem}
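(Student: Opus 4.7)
The plan is to deduce Theorem \ref{ConofSum} as a direct specialization of Theorem \ref{ConcofProj} to the case $d=1$. Let $a=(a_1,\ldots,a_n)\in\R^n$; by hypothesis $\V a\V_2 = 1$. Let $P$ denote the orthogonal projection in $\R^n$ onto the one-dimensional subspace $\span(a)$.

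First I would verify that the concentration function of the projected vector $PZ$ coincides with that of the scalar random variable $\sum_i a_i Z_i$. Writing any $u\in\R^n$ as $u=sa+u^{\perp}$ with $u^{\perp}\perp a$, and using $PZ=\langle Z,a\rangle a$ together with $\V a\V_2=1$, one computes
\[
\V PZ-u\V_2^2 \;=\; \bigl(\langle Z,a\rangle - s\bigr)^2 + \V u^{\perp}\V_2^2.
\]
Hence the supremum over $u\in\R^n$ in the definition of $\L(PZ,t)$ is attained at $u^{\perp}=0$, which yields
\[
\L(PZ,t)\;=\;\sup_{s\in\R}\P\bigl(\bigl|\langle Z,a\rangle-s\bigr|\le t\bigr)\;=\;\L\!\left(\sum_{i=1}^{n} a_i Z_i,\,t\right).
\]

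Second, I would invoke Theorem \ref{ConcofProj} with $d=1$ and the same $t,p$. The hypothesis $\L(Z_i,t)\le p$ for all $i$ is identical to what that theorem requires, and its conclusion gives $\L(PZ, t\sqrt{1})\le (cp)^{1}=cp$. Combining this with the identity from the first step immediately delivers $\L(\sum_{i=1}^{n} a_i Z_i, t)\le cp$, with the same absolute constant $c$ as in Theorem \ref{ConcofProj}.

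No step here presents a genuine obstacle; the proof is essentially a one-line reduction. The only point deserving care is the identification of the vector-valued concentration function $\L(PZ,\cdot)$ with the scalar concentration function of $\sum_i a_i Z_i$, which is straightforward because $P$ projects onto a line and $a$ is a unit vector, so the ambient Euclidean ball of radius $t$ intersected with $\span(a)$ is exactly a symmetric interval of radius $t$.
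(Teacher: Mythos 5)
Your proof is correct and matches the paper's intent exactly: the paper introduces Theorem \ref{ConofSum} precisely as ``a special case of Theorem \ref{ConcofProj}'' without spelling out the $d=1$ reduction, and your verification that $\L(PZ,t)=\L(\sum_i a_i Z_i,t)$ via the orthogonal decomposition $u=sa+u^{\perp}$ is the routine identification that makes that specialization rigorous.
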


\section{Proof of Theorem \ref{upperbound}}

Before proving the theorem, let us explain our strategy.
We prove a lower bound for $s_l(A^{-1})$, rather than proving an upper bound for $s_{n+1-l}(A)$. To do this, we show that there exists an $l$ dimensional subspace, such that the smallest singular value of the operator restricted on this subspace is bounded from below. Our target subspace will be $H_l^{\perp}$.

The proof uses a net argument for a specially constructed net. In Step 1, we obtain a small ball probability estimate for a random vector. A generic vector in $H_l^{\perp}$ can be represented as $A^{-1} P_l^{\perp}A_ly$ for some $y \in \R^l$.
We will show that $\frac{\V A^{-1} P_l^{\perp}A_ly \V_2}{\V P_l^{\perp}A_l y\V_2}$ is bounded from below by $\frac{C\sqrt{n}}{l}$ for any $y\in S^{l-1}$. In steps 2 and 3, we provide a union bound argument.

There are three essential features of our proof. First, as $H_l^{\perp}$ is a random subspace, we cannot consider a net on $S_{H_l^{\perp}}$ directly. So, we consider a net $\mathcal{N}_{\vv}$ on $S^{l-1}$ instead, which will induce a net on $H_l^{\perp}$. Second, to complete the argument we need to show the union bound probability of the form $\v \mathcal{N}_{\vv} \v \exp(-Cl)$ is small, where $C$ is a small constant. Since $\v \mathcal{N}_{\vv} \v \sim \left( \frac{3}{\vv} \right)^l$, this
bound in general can be large. To control the probability, we work not on $y\in S^{l-1}$ but on $y\in S^{l'-1}$, $l'=\lfloor \kappa l \rfloor$
for some $\kappa \in (0,1)$ instead. With this dimension reduction argument, we end up proving that $s_{l'}(A^{-1})\ge \frac{C\sqrt{n}}{l}$, and then we rephrase it. Third, representing a vector from $H_l^{\perp}$ as $A^{-1} P_l^{\perp}A_ly$  is advantageous because
$$
\V A^{-1} P_l^{\perp}A_ly \V_2^2=\V B A_l y \V_2^2+1
$$
where $B$ is a random matrix that is independent of $A_l$. This allows us to analyze the property of $B$ first and then apply tools like Theorem \ref{SBofLIofHDdistr} and Theorem \ref{subgOPconc}. Note that this construction was generalized from the one dimensional case presented by M. Rudelson and R. Vershynin \cite{UBssvMR}.

In the proof, we will use the well-known estimate that there exists an $\vv$-net on $S^{l-1}$ with cardinality less than $\left( \frac{3}{\vv} \right)^l$, see, e.g.,  Lemma 4.3 in \cite{LecnotesRMMR}.

\begin{proof}[Proof of Theorem \ref{upperbound}]

To prove Theorem \ref{upperbound}, we only need to prove the following claim:

{\bf Claim.} There exist $C_1$ and $C_2$ that only depend on $K,p$ such that for every $l$ between 1 and $n$,

$$
\P\left( s_{n+1-l}(A) \ge C_1\frac{l}{\sqrt{n}} \right)
\le \exp(-C_2 l ).
$$
To start, we derive Theorem \ref{upperbound} from the claim.
Let $t\ge 1$, and let $k$ be any integer between 1 and $n$. Set $l=\lfloor tk \rfloor $ and assume for a moment that $l<n$. Then
\begin{equation}
\begin{array}{rl}
\displaystyle \P \left( s_{n+1-k}(A) \ge C_1 \frac{2tk}{\sqrt{n}} \right)
\displaystyle & \le \P \left( s_{n+1-k}(A) \ge \frac{C_1 l}{\sqrt{n}} \right)\\
\displaystyle & \le \exp(-C_2l) \le \exp\left(-\frac{C_2tk}{2}\right).
\end{array}
\end{equation}
In the case $l\ge n$, the sub-gaussian tail estimate for the norm of a random matrix (one may also consider this as a special case of Theorem \ref{subgOPconc}) yields
\begin{equation}
\begin{array}{rl}
\displaystyle \P \left( s_{n+1-k}(A) \ge C_3 \frac{2tk}{\sqrt{n}} \right)
\displaystyle & \le \P \left( s_{1}(A) \ge C_3 \frac{2tk}{\sqrt{n}} \right)\\
\displaystyle & \le \exp\left(-C \frac{C_3^2t^2k^2}{n} \right)
\le \exp\left(-C_4 tk\right),
\end{array}
\end{equation}
and therefore for all $k$ between 1 and $n$,
$$
\P \left( s_{n+1-k}(A) \ge C_5 \frac{tk}{\sqrt{n}} \right)
\le \exp(-C_6tk)
$$
with constants $C_5,C_6$ depending on $p,K$ only.
So Theorem \ref{upperbound} is implied by the claim.
\end{proof}

Now, we prove the above claim.
\begin{proof}[Proof of the claim.]
In the proof of the claim, we first assume $l\le \frac{\tilde{c}n}{2}$, where $\tilde{c}$ is the same as the $\tilde{c}$ which appeared in Theorem \ref{disttoSubsp}. If $l>Cn$, then the required bound follows from the estimate for $s_1(A)$. This is a standard estimate of the operator norm that can be found in many places, for example, in Theorem 2.4 of \cite{SMstSvRect}.
Let $\alpha>1, \delta, \kappa <1,\beta<\alpha^{-1}<1$ be parameters to be chosen later.
Also,  assume that
\begin{equation} \label{concCondition}
  \L(A_{i,j}, \beta ) \le p \beta
\end{equation}
i.e. Assumption \ref{asump1} is true with $s=s_0(p,K)=\beta$.

{\bf Step 1.} {\bf Concentration for a random vector}.
Consider $y \in S^{l-1}$, define
\begin{equation}  \label{eq: def U}
U(y):=X(y)- P_l X(y):= A_l y -P_l A_l y.
\end{equation}
then $X(y):=A_l y$ is still a mean 0, variance 1 sub-gaussian random vector. According to the Hoeffding inequality, the sub-gaussian moment of entries of $X(y)$ is bounded above by $CK$ (see Theorem 3.3 in \cite{LecnotesRMMR}). Without ambiguity, we use the notation $U,X$ instead of $U(y),X(y)$.

In step 1, we show with high probability that
$$
\V U\V_2 \lesssim \sqrt{l}, \ \V A^{-1} U\V_2 \gtrsim \frac{\sqrt{n}}{\sqrt{l}}.
$$
First, we give an upper bound for $\V P_l^{\perp}A_l\V$. This leads to an uniform upper bound of $\V U(y) \V_2$ for all $y \in S^{l-1}$.
\newline

{\bf Step 1.1. Concentration of $\V P_l^{\perp} A_l \V$.}

First, notice that $I-P_l=P_l^{\perp}$, which is an orthogonal projection onto $H_l^{\perp}$, so it does not depend on $A_l$ only on $A_{n-l}$. Thus, $P_l^{\perp}$ can be treated as a fixed matrix. We apply Theorem \ref{subgOPconc} with $B=P_l^{\perp}$ and $G=A_l$, then we have
$$
\P (\V P_l^{\perp} A_l \V > \alpha\sqrt{l} )\le 2\exp(-C\alpha^2l).
$$
In particular, for a single vector we have
$$
\P (\V U \V_2 > \alpha\sqrt{l} )= \P (\V P_l^{\perp} A_l y \V_2 > \alpha\sqrt{l} ) \le 2\exp(-C\alpha^2l).
$$
\newline

{\bf Step 1.2. Concentration of $\V A^{-1} U \V_2$.}

Now consider
$$
A^{-1}U=A^{-1} A_l y - A^{-1}P_l A_l y=y-A^{-1}P_l A_l y.
$$
Notice that $A^{-1}P_l A_l y$ is supported in span $\{ e_{l+1}, \cdots, e_n \}$ because $P_l A_l y\in H_l$. So we have
\begin{equation}
\begin{array}{rl}
\V A^{-1} U\V_2^2 & = \V y\V_2^2 + \V A^{-1}P_l A_l y \V_2^2 >\V A^{-1}P_l A_l y \V_2^2\\
& = \displaystyle \sum_{k=1}^{n} \langle A^{-1}P_l A_l y, e_k \rangle^2 = \sum_{k=1}^{n} \langle P_l X, (A^{-1})^T e_k \rangle^2\\
&  =\displaystyle \sum_{k=1}^{n} \langle  P_l X, X_k^* \rangle^2 = \sum_{k=l+1}^{n} \langle  P_l X, X_k^* \rangle^2\\
& = \displaystyle \sum_{k=l+1}^{n} \langle  X, P_l X_k^* \rangle^2 = \sum_{k=l+1}^{n} \langle  X, Y_k^* \rangle^2
\end{array}
\end{equation}
where in the third line we used the fact that $(X_k,X_k^*)_{k=1}^n$ forms a complete biorthogonal system on $\mathbb{R}^n$ from Lemma \ref{BOSp}. Thus, $X_k^* \perp H_l, k\le l$.

Using the above property, let $B$ be the $(n-l)\times n$ matrix whose rows are $(Y_k^*)^T, k=l+1,l+2,\cdots,n$. Then we have
$$
\V A^{-1} U\V_2^2 \ge \sum_{k=l+1}^{n} \langle  X, Y_k^* \rangle^2 = \V B X \V_2^2.
$$
Our goal is to get a small ball probability estimate of $\V B X \V$. As $B$ is independent of $X$, we would like to apply Theorem \ref{SBofLIofHDdistr}. Thus, we first need an estimate for $\V B \V$  and $\V B \V_{\HS}$.
\newline

{\bf Step 1.2.1. Lower bound of $\V B \V_{\HS} $.}

According to Theorems \ref{LIofnewBOS}, \ref{newBOS} and \ref{BOSp}, we have
$$
\V B \V_{\HS}^2 = \sum_{k=l+1}^{n} \V Y_k^* \V_2^2 = \sum_{k=l+1}^{n}{\rm dist}(X_k,H_{l,k})^{-2}= \sum_{k=l+1}^{n}\V P_{l,k}^{\perp}X_k \V_2^{-2}.
$$
Denote $V_j={\rm dist}^2(X_j,H_{l,j})$. Then
\begin{equation}
\begin{array}{rl}
\P\left(\V B \V_{\HS} < \alpha^{-1}\sqrt{\displaystyle\frac{n-l}{l}}\right)
& = \P\left( \left( \displaystyle \frac{1}{n-l} \sum_{j=l+1}^n V_j^{-1} \right)^{-1} > \alpha^2 l\right)\\
& \le \P\left(  \displaystyle \frac{1}{n-l} \sum_{j=l+1}^n V_j > \alpha^2 l\right)\\
& = \P\left(  \displaystyle \frac{1}{n-l} \sum_{j=l+1}^n (V_j-4(l+1))) > \alpha^2 l -4(l+1) \right)\\
& \le \P\left(  \displaystyle \frac{1}{n-l} \sum_{j=l+1}^n (V_j-4(l+1)))_+ > \frac{\alpha^2}{2} l\right).
\end{array}
\end{equation}
where the first inequality follows from the inequality between harmonic mean and arithmetic mean and the second inequality is trivial if we provide $\alpha^2 > 10$. Consider
\begin{equation}
\begin{array}{rl}
\P\left( (V_j-4(l+1)))_+ > 4t \right) & \le \P\left( \sqrt{V_j} > \sqrt{l+1}+\sqrt{t} \right)\\
& =\P\left( \V P_{l,j}^{\perp}X_j \V_2 - \sqrt{l+1} > \sqrt{t} \right)
\end{array}
\end{equation}
Then applying Theorem \ref{subgVconc} with $A=P_{l,k}^{\perp}$ we have $M=\sqrt{l+1}$. Thus $(V_j-4(l+1)))_+$ is a sub-exponential random variable with $\V (V_j-4(l+1)))_+ \V_{\psi_1} \le C$. By the triangle inequality,
$$
\left\Vert \displaystyle \frac{1}{n-l} \sum_{j=l+1}^n (V_j-4(l+1)))_+ \right\Vert_{\psi_1}\le C.
$$
Recalling that $l\le \frac{n}{2}$, we have
$$
\P\left(\V B \V_{\HS} < \alpha^{-1}\sqrt{\frac{n}{l}}\right)\le \exp( - C\alpha^2 l).
$$
\newline

{\bf Step 1.2.2. Upper bound of $\V B \V$.}

First, we have
\begin{equation}
\begin{array}{rl}
\V B \V^2 & = \displaystyle\sup_{x\in S^{n-1}-\{0\}} \V Bx \V_2^2= \sup_{x\in \R^n-\{0\}} \frac{\V Bx \V_2^2}{\V x \V_2^2} = \sup_{\V Bx \V_2 =1} \frac{1}{\V x \V_2^2} \\
&=  \displaystyle\sup_{\substack{x\in \R^n-\{0\} \\ \sum_{k=l+1}^{n} \langle  x, Y_k^* \rangle^2=1}} \frac{1}{\V x \V_2^2}
=\displaystyle\sup_{\substack{x\in H_l-\{0\} \\ \sum_{k=l+1}^{n} \langle  x, Y_k^* \rangle^2=1}} \frac{1}{\V x \V_2^2}
\\
\end{array}
\end{equation}
where the last equality can be justified by considering the decomposition  $x=x'+x'',x'\in H_l,x''\in H_l^{\perp}$ with $\V Bx\V_2=1$. Since $Bx',Bx$ have the same $L_2$ norm and $x'$ has a smaller $L_2$ norm, the supremum must be achieved on $H_l$. Consider $x=A_{n-l}z, z\in S^{n-l-1}$, then
$$
\langle  x, Y_k^* \rangle^2=\langle  A_{n-l}z, P_l X_k^* \rangle^2= \langle  P_l A_{n-l}z,  X_k^* \rangle^2 = \left\langle   \displaystyle\sum_{k=l+1}^{n}z_k X_k,  X_k^* \right\rangle^2 = z_k^2.
$$
Thus, we have
$$
\V B \V^2= \displaystyle\sup_{\substack{z\in S^{n-l-1} }} \frac{1}{\V A_{n-l}z \V_2^2}= s_{n+1-l}(A_{n-l})^{-2}.
$$
By Theorem \ref{leastSVLB}, we have
$$
\P \left( \V B \V > \alpha \frac{\sqrt{n}}{l} \right)
=\P \left( s_{n+1-l}(A_{n-l}) < \alpha^{-1} \frac{l}{\sqrt{n}} \right) \le (C\alpha^{-1})^l +\exp(-Cn).
$$
\newline

{\bf Step 1.2.3. Concentration of $\V B X\V$.}

By Lemma \ref{LIofnewBOS}, $B$ is independent to $X$. So we may condition on $B$ such that $\V B \V_{\HS} > \alpha^{-1}\sqrt{\frac{n}{l}}$ and $\V B \V < \alpha \frac{\sqrt{n}}{l}$.  By equation (\ref{concCondition}) and Theorem \ref{ConofSum},
\begin{equation}
  \L(X_i, \beta ) \le c p \beta,\  {\rm for} \ {\rm  all}\  i \in [n].
\end{equation}
So, we may apply Theorem \ref{SBofLIofHDdistr} with $\vv=\frac{1}{2}$ and have
\begin{equation}
\begin{array}{rl}
\P\left( \displaystyle \V BX\V_2 \le \beta\alpha^{-1} \sqrt{\frac{n}{l}}\right) & \le \P\left( \V BX\V_2 \le \beta \V B  \V_{\HS}\right) \\
& \displaystyle \le (C\beta)^{cr(B)}\le \left( C\beta \right)^{\frac{l}{2\alpha^{4}}}.
\end{array}
\end{equation}
\newline

{\bf Conclusion of step 1.}
Consider the events,
\begin{equation}
\begin{array}{rl}
\mathcal{E}_1 & := \left\{A: \V P_l^{\perp} A_l \V > \alpha\sqrt{l} \right\}\\
\mathcal{E}_2 & := \left\{A: \V B \V_{\HS} < \alpha^{-1}\sqrt{\frac{n}{l}},\ {\rm and}\ \V B \V > \alpha \frac{\sqrt{n}}{l} \right\}.
\end{array}
\end{equation}
We have shown
\begin{equation}
\begin{array}{rl}
\P(\mathcal{E}_1) & \le 2\exp(-C\alpha^2l)\\
\P(\mathcal{E}_2) & \le \exp( - C\alpha^2l)+(C\alpha^{-1})^l +\exp(-Cn)
\end{array}
\end{equation}
By conditioning on $\mathcal{E}_2^c$ for all $ y \in S^{l-1}$ and a vector $U$ defined in \eqref{eq: def U}, we have
$$
\displaystyle \P \left(  \V A^{-1} U\V_2 < \beta\alpha^{-1} \sqrt{\frac{n}{l}} \bigg\vert \mathcal{E}_2^c \right)
< \left( C\beta \right)^{\frac{l}{2\alpha^{4}}}.
$$
\newline

{\bf Step 2: Preparation for the union bound argument.}

Now let $E_1$(or in fact $\R^{l'}$) be an $l':=\lfloor \kappa l \rfloor$ dimensional coordinate subspace that is spanned by $e_1,\cdots,e_{l'}$. We consider an $\vv-$net $\N_{\vv}$ on $S^{l'-1}$(i.e. $S_{E_1}$), then $\v \N_{\vv}\v\le (3\vv^{-1})^{l'}$. And for all $y_i \in \N_{\vv}$, define
$$
U_i=U(y_i):=X(y_i)- P_l X(y_i):= A_l y_i -P_l A_l y_i.
$$

{\bf Step 2.1. $(A_l -P_l A_l)\N_{\vv}$ is a net on some ellipsoid.}

Let
\[
  E_2:=(A_l -P_l A_l)\R^{l'},  \quad S_2:=(A_l -P_l A_l)S^{l'-1}.
\]
By step 1.1, with probability $1-\exp(-C\alpha^2l)$, $\V P_l^{\perp} A_l \V \le \alpha\sqrt{l}$, i.e., $S_2\subset \alpha \sqrt{l} B_2^{l'}$.

Also, consider any cap on $S_2$ of radius $\delta$. Then if $P_l^{\perp} A_l\N_{\vv}$ is not a $\delta\sqrt{l}$-net on $S_2$, there exists some $\delta\sqrt{l}$ cap that does not intersect $P_l^{\perp} A_l\N_{\vv}$. This means that the pre-image of the cap does not intersect $\N_{\vv}$.
However, if $\V P_l^{\perp} A_l \V \le \alpha\sqrt{l}$, then the pre-image contains a cap of radius at least $\frac{\delta}{\alpha}$. Thus, for $\vv = \frac{\delta}{ \alpha}$, with probability $1-\exp(-C\alpha^2l)$, $P_l^{\perp} A_l\N_{\vv}$ is a $\delta\sqrt{l}$-net on $S_2\subset \alpha \sqrt{l} B_2^{E_2}$. We denote this $\delta\sqrt{l}$-net by $\N_{\delta}:=P_l^{\perp} A_l\N_{\vv}$
\newline

{\bf Step 2.2. Reduction of our objective.}

Now, we want to show that for some small choice of $\kappa$, $\V A^{-1}U \V_2 \gtrsim \sqrt{\frac{n}{l}}$, for all $U\in S_2$ with high probability. If we can prove this, then together with step 2.1., we have $s_{l'} (A^{-1}) \gtrsim \frac{\sqrt{n}}{l} $ with high probability.

On the event that $P_l^{\perp} A_l\N_{\vv}$ forms a $\delta\sqrt{l}$ net on $S_2$, we have for all $U\in S_2$, 
there exists some $U_i\in \N_{\delta}$ such that $\V U-U_i \V_2 \le \delta \sqrt{l}$, and
$$
\V A^{-1} U\V_2 \ge \V A^{-1}U_i\V_2 -\V A^{-1}(U_i-U)\V_2.
$$
For the first term, since we have
$$
\v \mathcal{N}_{\delta}\v = \v \mathcal{N}_{\vv}\v \le (3\vv^{-1})^{l'}= \left(\frac{3\alpha}{\delta}\right)^{l'}
$$
we obtain $\V A^{-1}U_i\V_2 \gtrsim \sqrt{\frac{n}{l}}$, for all $U_i$ with high probability by choosing $\kappa$ small.

To bound $\V A^{-1}(U_i-U)\V_2$ from above for $\V U_i-U\V_2 \lesssim \sqrt{l}$, we only have to prove
$$
\left\Vert A^{-1}\vert_{E_2} \right\Vert \lesssim \frac{\sqrt{n}}{l}
$$
with high probability.
\newline

{\bf Step 2.3. Upper bound for $\left\Vert A^{-1}\vert_{E_2} \right\Vert$.}

Notice that
$$
\left\Vert A^{-1}\vert_{E_2} \right\Vert \le
\left\Vert A^{-1}P_l^{\perp} A_{l'} \right\Vert \cdot
\left\Vert (P_l^{\perp}A_{l'})^{-1}: E_2\rightarrow \mathbb{R}^{l'} \right\Vert
=\frac{\left\Vert A^{-1}P_l^{\perp} A_{l'} \right\Vert}{s_{\rm min}(P_l^{\perp}A_{l'}) }.
$$
We only need to prove for $\kappa$ small enough:

\begin{enumerate}
\item $s_{\rm min}(P_l^{\perp}A_{l'}) \gtrsim  \sqrt{l}$ with high probability.
\item $\left\Vert A^{-1}P_l^{\perp} A_{l'} \right\Vert \lesssim \sqrt{\frac{n}{l}}$ with high probability.
\end{enumerate}

{\bf Step 2.3.1. Lower bound of $s_{\rm min}(P_l^{\perp}A_{l'})$.}

First, by Theorem \ref{subgOPconc},
$$
\P\left( \V P_l^{\perp}A_{l'} \V \ge \alpha\sqrt{l}\right) \le 2\exp(-C\alpha^2l).
$$
Next, consider a $\frac{1}{2\alpha^2}$-net $\N$ on $S^{l'-1}$, then $|\N| \le (6\alpha^2)^{l'}$. And for all $ y_i \in \N$, consider $A_{l'}y_i$ as a random vector.
We use an elementary inequality $\L(Z,mt) \le m \L(Z,t)$ which holds for any $m \in \mathbb{N}$.
Consider $\beta < \alpha^{-1}$, then by equation (\ref{concCondition}) and Theorem \ref{ConofSum},
\begin{equation}
  \L((A_{l'}y_i)_j, \alpha^{-1} ) \le c p \beta \left\lfloor \frac{\alpha^{-1}}{\beta} +1 \right\rfloor \le 2 cp \alpha^{-1} , \ {\rm for }\ {\rm all}\  j \in [n].
\end{equation}

$P_l^{\perp}$ is decided by $A_{n-l}$, which is independent with $A_{l'}y_i$. So we may consider $P_l^{\perp}$ as a fixed matrix and apply Theorem \ref{ConcofProj} to obtain
$$
\displaystyle \P\left( \V P_l^{\perp}A_{l'} y_i \V_2 \le \alpha^{-1}\sqrt{l} \right) \le (C\alpha^{-1})^{l}.
$$
Let $y\in S^{l'-1}$ and choose  $ y_i\in \mathcal{N}$ with $\V y-y' \V_2 < \frac{1}{2\alpha^2}$. Conditioning on $A$ such that $\V P_l^{\perp}A_{l'} \V \le \alpha\sqrt{l}$, then we have
\begin{equation}
\begin{array}{rl}
\V P_l^{\perp}A_{l'}y \V_2 & \ge \V P_l^{\perp}A_{l'}y_i \V_2 - \V P_l^{\perp}A_{l'}\V \V y-y_i \V_2\\
 & \ge \alpha^{-1} \sqrt{l} - \frac{1}{2\alpha^2}\alpha\sqrt{l}=\frac{1}{2\alpha}\sqrt{l}\\
\end{array}
\end{equation}
with probability $1-(C\alpha^{-1})^{l}$.

Thus, with a standard union bound argument, we have
$$
\P\left( s_{\rm min}\left( P_l^{\perp}A_{l'} \right) \ge  \frac{1}{2\alpha}\sqrt{l} \right)\ge 1-2\exp(-C\alpha^2l)-\left(6\alpha^2\right)^{l'}(C\alpha^{-1})^{l}.
$$
\newline

{\bf Step 2.3.2. Upper bound of $\left\Vert A^{-1}P_l^{\perp} A_{l'} \right\Vert$.}

Recall for $y\in S^{l'-1}$
$$
\V A^{-1} P_l^{\perp}A_{l'} y\V_2^2  = \V y\V_2^2 + \V A^{-1}P_l A_l y \V_2^2 = 1+\V BA_{l'} y\V_2^2
$$
where
$$
\V B \V_{\HS}^2=\sum_{k=l+1}^{n}{\rm dist}(X_k,H_{l,k})^{-2}.
$$
Thus, we only need to show $\V BA_{l'} \V \lesssim \sqrt{\frac{n}{l}}$. We will prove this using Theorem \ref{subgOPconc}. To apply Theorem \ref{subgOPconc}, we employ an argument that is presented in \cite[Section 5.4.1. and Section 13.2.]{Nogapsdeloc}. This argument provides an upper estimate of $\V B \V_{\HS}$.

Recall that the weak $L^p$ norm of a random variable $Z$ is defined as
$$
\V Z \V_{p,\infty} =\sup_{t>0} t\cdot \left(\P \left\{ |Z|>t\right\} \right)^{1/p}.
$$
Although it is not a norm, it is equivalent to a norm if $p>1$. In particular, the weak triangle inequality holds:
$$
\displaystyle \left \Vert \sum_i Z_i \right\Vert_{p,\infty} \le C(p) \sum_i \V Z_i \V_{p,\infty}
$$
where $C(p)$ is bounded above by an absolute constant for $p\ge 2$, see \cite{EMSteinGWeiss}, Theorem 3.21.

Now by Theorem \ref{disttoSubsp}, for any $t>0$,
$$
\P \left\{{\rm dist}(X_k,H_{l,k}) \le t\sqrt{l}\right\}  \le (Ct)^l+\exp(-Cn).
$$
Define
$$
W_k: =\min\left(\text{dist}(X_k, H_{l,k})^{-2}, (t_0 \sqrt{l})^{-2}\right)
$$
where $t_0=\frac{C_0l}{n}$ and $C_0$ is a small constant depending only on $K$.
Then we have
\begin{equation}
\begin{array}{rl}
\displaystyle \left \Vert  W_k \right\Vert_{l/2,\infty} & =
\sup_{t>0} t\cdot \left(\P \left\{ W_k >t\right\} \right)^{2/l}\\
& = \sup_{t>0} t^{-2}l^{-1} \cdot \left(\P \left\{ W_k^{-\frac{1}{2}} <t\sqrt{l}\right\} \right)^{2/l}\\
& = \sup_{t>t_0 } t^{-2}l^{-1} \cdot \left(\P \left\{ {\rm dist}(X_k,H_{l,k}) <t\sqrt{l}\right\} \right)^{2/l}\\
\displaystyle & \le \frac{C}{l}+\frac{1}{t_0^2 l}\exp\left( -C\frac{n}{l} \right)\\
& \le  \frac{C}{l}+\frac{1}{l}\left(  \frac{n^2}{C_0^2 l^2} \exp\left( -C\frac{n}{l} \right) \right) \le \frac{C}{l}.
\end{array}
\end{equation}
this implies
$$
\left\Vert \sum_{k=l+1}^{n}W_k \right\Vert_{l/2,\infty} \le \frac{C(n-l)}{l}\le \frac{Cn}{l}.
$$
Thus, we have
$$
\P \left\{\sum_{k=l+1}^{n}W_k > t^2\frac{n}{l}\right\} \le (Ct^{-1})^l.
$$
On the other hand,
\begin{equation}
\begin{array}{rl}
\displaystyle & \P \left({\rm there}\ {\rm exists}\  k, \ W_k \neq \text{dist}(X_k, H_{l,k})^{-2}\right)\\
 \le & \displaystyle\sum_{k=l+1}^{n}
\P \left\{{\rm dist}(X_k,H_{l,k})  \le t_0\sqrt{l}\right\} \\
\displaystyle \le & (n-l)\left( \left( \frac{CC_0l}{n} \right)^l+\exp(-Cn) \right)\\
\le & \exp(-Cl).
\end{array}
\end{equation}
So we have
\begin{equation}
\begin{array}{rl}
& \displaystyle\P\left\{\V B \V_{\HS} > t \sqrt{\frac{n}{l}}\right\}\\
\le & \displaystyle \P \left\{\sum_{k=l+1}^{n}W_k > t^2\frac{n}{l}\right\}+
\P \left({\rm there}\ {\rm exists} \  k, \ W_k \neq \text{dist}(X_k, H_{l,k})^{-2}\right)
 \\
\le & (Ct^{-1})^l +\exp(-Cl).
\end{array}
\end{equation}

Now, denote
$$
\mathcal{E}_2':=\mathcal{E}_2\cup \left\{ A: \V B \V_{\HS} > \alpha \sqrt{\frac{n}{l}} \right\}.
$$
Applying Theorem \ref{subgOPconc} with $D=B,G=A_{l'},Cs=\frac{1}{2}\alpha,Ct=\frac{1}{2}\alpha\sqrt{\frac{l}{l'}}$,  we have for $\alpha$ large enough,
\begin{equation}
\begin{array}{rl}
& \displaystyle \P\left\{\V BA_{l'} \V  > \alpha^2\sqrt{\frac{n}{l}} \bigg\vert (\mathcal{E}_2')^c
 \right\}\\
\le &  \displaystyle \P\left\{\V BA_{l'} \V  > \frac{1}{2}\alpha\V B \V_{\HS} +
\left( \frac{1}{2} \alpha\sqrt{\frac{l}{l'}} \right) \sqrt{l'}\V B \V \bigg\vert (\mathcal{E}_2')^c
 \right\}\\
\le & 2\exp\left(-C\alpha^2l\left( \alpha^{-4} +1\right) \right)
\le 2\exp\left(-C\alpha^2l \right).
\end{array}
\end{equation}
So we have
$$
\P\left( \V A^{-1} P_l^{\perp} A_{l'} \V \ge \alpha^2 \sqrt{\frac{n}{l}} \bigg\vert (\mathcal{E}_2')^c \right) \le 2\exp\left(-C\alpha^2l \right).
$$
\newline

{\bf Conclusion of Step 2.}

Denote
\begin{equation}
\begin{array}{rl}
\mathcal{E}_3 & :=\left\{A: \left\Vert A^{-1}\vert_{E_2} \right\Vert
\ge     2\alpha^3 \frac{\sqrt{n}}{l}
\right\}\cup \mathcal{E}_1 \cup \mathcal{E}_2',\\
 \mathcal{E}_4 &:=\left\{A: \displaystyle {\rm there}\ {\rm exists} \  y_i \in \mathcal{N}_{\vv} {\rm \ such \ that \ }
 \V A^{-1}U(y_i) \V_2 \le \beta\alpha^{-1}\sqrt{\frac{n}{l}}  \right\}.
 \\
\end{array}
\end{equation}
Then we  have
$$
\P\left( \mathcal{E}_3 \right)\le
\left(6\alpha^2\right)^{l'}(C\alpha^{-1})^{l} +
4\exp(-C\alpha^2l)+\exp(-Cl)
+\P\left( \mathcal{E}_2' \right)
+\P\left( \mathcal{E}_1 \right).
$$
Since $\v N_{\vv }\v \le \left( \frac{\alpha\sqrt{l}}{\delta}\right)^{l'-1}$ as we discussed in step 2.1,
$$
\P\left( \mathcal{E}_4  \mid  \mathcal{E}_2^c\right) \le
\left(\frac{3\alpha\sqrt{l}}{\delta}\right)^{l'} \left( C\beta \right)^{\frac{ l}{2\alpha^4}}
$$
\newline

{\bf Step 3. The union bound argument.}

Denote
\begin{equation}
\begin{array}{rl}
\mathcal{E}:= &\bigg\{ A:
{\rm there} \ {\rm exists} \ y\in S^{l'-1},{\rm \ such \ that \ } \V A^{-1} U(y)\V_2 \le \frac{\beta}{2\alpha} \sqrt{\frac{n}{l}}, \\
& {\rm \ or\ } \V U(y) \V_2 \ge \alpha \sqrt{l}
\bigg\}.
\end{array}
\end{equation}
Choose $\delta$  $2\alpha^4\delta=\frac{\beta}{2}$.
 Let $y\in S^{l'-1}$ and choose  $ y_i\in \mathcal{N}_{\vv}$ with $\V y-y' \V_2 < \delta$. If $ A \notin \mathcal{E}_3\cup\mathcal{E}_4$, then $\V U(y) \V \le \alpha \sqrt{l}$ and
\begin{equation}
\begin{array}{rl}
\V A^{-1} U(y)\V_2 & \ge \V A^{-1}U(y)\V_2 -\V A^{-1}(U(y_i)-U(y))\V_2\\
& \ge \beta\alpha^{-1}\sqrt{\frac{n}{l}}- 2\alpha^3 \frac{\sqrt{n}}{l}\cdot \delta\\
& \ge \frac{\beta}{2\alpha}\sqrt{\frac{n}{l}}.
\end{array}
\end{equation}
Thus, we have $\mathcal{E}\subset \mathcal{E}_3\cup \mathcal{E}_4$. On the other hand,
\begin{equation}
\begin{array}{rl}
A\in \mathcal{E}^c
& \Rightarrow s_{l'}(A^{-1}) \ge \frac{\beta}{2\alpha^2}\frac{\sqrt{n}}{l}
\Rightarrow s_{n+1-l'}(A) \le \frac{2\beta}{\alpha^2}\frac{l}{\sqrt{n}}\\
& \Rightarrow s_{n+1-l'}(A) \le \frac{4\beta}{\kappa\alpha^2}\frac{l'}{\sqrt{n}}
, \ {\rm for }\ {\rm all}\  l'< \frac{\kappa \tilde{c} n}{2}.
\end{array}
\end{equation}
The $\frac{\tilde{c}}{2}$ in $l'< \frac{\tilde{c}\kappa n}{2}$ comes from the requirement that $l\le \frac{\tilde{c}n}{2}$.

Now let $\beta=\exp(-\alpha^5)$, then $\delta=\frac{1}{4} \alpha^{-4} \exp(-\alpha^5)$. Choose $\alpha $ to be a big enough constant, then
\begin{equation}
\begin{array}{rl}
\displaystyle & \P\left( \mathcal{E} \right)\\
\le &   \P\left( \mathcal{E}_3 \right)+ \P \left( \mathcal{E}_4 \right)\\
\displaystyle\le &
\left(\frac{3\alpha}{\delta}\right)^{l'} \left( C\beta \right)^{\frac{ l}{2\alpha^4}}
+7 \exp(-C\alpha^2l) + 2(C\alpha^{-1})^l+\exp(-Cl)  +\exp(-Cn)\\
\displaystyle& + \left(6\alpha^2\right)^{l'}(C\alpha^{-1})^{l}\\
\displaystyle \le &
\left(\left( 12 \alpha^5 \exp(\alpha^5) \right)^{\kappa}
C \exp(-\frac{1}{2}\alpha)
\right)^l
+
\left( C \alpha^{2\kappa - 1}
\right)^l + 9 (C\alpha^{-1})^l  +\exp(-Cl).\\
\end{array}
\end{equation}
Replace $l',l$ by $l,\kappa^{-1}l$, then for a sufficiently small $\kappa$ depending on $\alpha$, and $l < \frac{\kappa \tilde{c} n}{2}$, we have
\begin{equation}
\begin{array}{rl}
& \P\left( s_{n+1-l}(A) \ge \frac{4\beta}{\kappa\alpha^2}\frac{l}{\sqrt{n}} \right)\\
\le & \left( \exp(-\frac{1}{4}\alpha)
\right)^{\kappa^{-1}l}
+ \left( C \alpha^{ - \frac{1}{2}}
\right)^{\kappa^{-1}l} + 9 (C\alpha^{-1})^{\kappa^{-1}l}  +\exp(-Cl)\\
\le & (C\alpha^{-\frac{1}{2}})^{\kappa^{-1}l}  +\exp(-Cl).
\end{array}
\end{equation}
Choosing a sufficiently large $\alpha$, we show that there exist $ C_1, C_2, C_3 $  depending only on $K,p$, such that
$$
\P\left( s_{n+1-l}(A) \ge C_1\frac{l}{\sqrt{n}} \right)
\le \exp(-C_2 l )
$$
for all $l \le C_3 n.$ For $l>C_3n$, the above bound follows from the estimate for $s_1(A)$. So we have for all $1\le l\le n$,
$$
\P\left( s_{n+1-l}(A) \ge C_1\frac{l}{\sqrt{n}} \right)
\le \exp(-C_2 l ).
$$
\end{proof}
\begin{remark}{\rm
As the proof demonstrates, Assumption \ref{asump1} is satisfied with $s=\beta$ which only depends on $p$ and $K$. }
\end{remark}

\begin{remark}{\rm The only place we used the non-degeneracy condition is in the application of \ref{SBofLIofHDdistr}. We expect that the same result holds without the concentration function condition. To remove that condition, the application of Theorem \ref{SBofLIofHDdistr} on $BX$ must be replaced by showing matrix $B$ does not have a good arithmetic structure with high probability (for arithmetic structure of random matrices and its application, see \cite{InvofRM, LecnotesRMMR, LOandInvofRM, NAtheoryofRM, Nogapsdeloc} ).
}\end{remark}

\section{Deduction of Corollary \ref{upperboundRect} and \ref{mainthm}}
Both Theorem \ref{upperboundRect} and \ref{mainthm} are direct corollaries of Theorem \ref{upperbound}.
\begin{proof}[Proof of Theorem \ref{upperboundRect}]
Construct an $n\times n$ random matrix $J$ such that its first $n-k$ rows are matrix $A$ and the rest are i.i.d. entries with the same distribution as $A_{i,j}$. Then, by Theorem \ref{upperbound}, for all $ t>0$ and $k$ between $l$ and $n$, $s_{n+1-l}(J)\le \frac{C_1 t l}{\sqrt{n}}$, with probability $1-\exp(-C_2 t l)$, where $C_1,C_2$ are constants that depend only on $K,p$. This implies, with the same probability, there exists an $l$-dimensional subspace $E$ such that $\V Jy \V_2 \le \frac{C_1tl}{\sqrt{n}}$ for all $ y\in S_E$.

Let $F:={\rm span}\left\{ e_1, \cdots, e_{n-k} \right\}$, then $\V Jy \V_2 \le \frac{C_1 tl}{\sqrt{n}}$, for all $y\in S_{E\cap F}$ with  probability $1-\exp(-C_2 t l)$. This implies
$$
\P_J \left\{ s_{n+1-l}(A) \ge \frac{C_1 tl}{\sqrt{n}} \right\} \le \exp(-C_2 t l)
$$
Then we only need to notice that the above event is independent of the last $k$ rows of $J$; thus the probability is also with respect to $A$.
\end{proof}

To prove Theorem \ref{mainthm},  in addition to applying Theorem \ref{upperbound}, we only need Theorem \ref{leastSVLB} to give a lower bound.
\begin{proof}[Proof of Theorem \ref{mainthm}]
For the lower bound, denote $J$ as the matrix of the first $n-l$ rows of $A$. Then we have by Theorem \ref{leastSVLB}
\begin{equation}
\begin{array}{rl}
\P\left\{ s_{n+1-l}(A) < \frac{C_1l}{\sqrt{n}}  \right\}
&\le \P\left\{ s_{n+1-l}(J) < \frac{C_1l}{\sqrt{n}}  \right\}\\
&\le (CC_1)^l+\exp(-Cn) \le  \frac{1}{2}\exp(-\frac{1}{2}C_3l)+\exp(-Cn)\\
&\le \frac{1}{2}\exp(-C_3l)
\end{array}
\end{equation}
with some small constant $C_3$.
The upper bound follows directly with a large $t$ in Theorem \ref{upperbound}.

\end{proof}
Note that Theorem \ref{mainthm} is a generalization of Theorem 1.3 in \cite{DisttoLinfSZ}. Theorem \ref{mainthmrect} can be proved in the same way as Theorem \ref{mainthm}.

\section{Acknowledgement}
I want to thank my advisor Professor Mark Rudelson for many helpful discussions and advice.

\begin{appendices}

\section{Proof of Theorem \ref{BOSp}}

\newtheorem*{thminap}{Theorem  \ref{BOSp}}
\begin{thminap}
\begin{enumerate}
\item  Let $D$ be an $n\times n$ invertible matrix with columns $v_k=De_k$, $k=1,2,\cdots,n$. Define $v_k^*=(D^{-1})^*e_k$. Then $(v_k,v_k^*)_{k=1}^n$ is a complete biorthogonal system in $\mathbb{R}^n$.\\
\item  Let $(v_k)_{k=1}^n$ be a linearly independent system in an $n-$dimensional Hilbert space $H$. Then there exist unique vectors $(v_k^*)_{k=1}^n$ such that $(v_k,v_k^*)_{k=1}^n$ is a biorthogonal system in $H$. This system is complete.\\
\item  Let $(v_k,v_k^*)_{k=1}^n$ be a complete biorthogonal system in a Hilbert space $H$. Then $\V v_k^* \V_2 = 1/{\rm dist}(v_k,{\rm span}(v_j)_{j\neq k}) $ for $k=1,2,\cdots,n$.
\end{enumerate}
\end{thminap}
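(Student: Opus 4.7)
\textbf{Plan for Part (1).} This is a direct computation. I would write $\langle v_j, v_k^*\rangle = \langle De_j, (D^{-1})^* e_k\rangle = \langle D^{-1}De_j, e_k\rangle = \langle e_j, e_k\rangle = \delta_{j,k}$. Completeness is immediate since $D$ is invertible and its columns are exactly the $v_k$, so $\operatorname{span}(v_k)_{k=1}^n = \mathbb{R}^n$.

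\textbf{Plan for Part (2).} For existence, I would pick an orthonormal basis of $H$ to identify $H$ with $\mathbb{R}^n$, assemble the $v_k$ as the columns of a matrix $D$, and invoke Part (1) to produce $v_k^* = (D^{-1})^* e_k$; linear independence of $(v_k)_{k=1}^n$ makes $D$ invertible. For uniqueness, suppose $(v_k^*)$ and $(w_k^*)$ are two candidates. Then $\langle v_j, v_k^* - w_k^*\rangle = 0$ for every $j$, and since $(v_j)_{j=1}^n$ is a basis of $H$, the only vector orthogonal to all basis elements is $0$, so $v_k^* = w_k^*$. Completeness follows because $(v_k^*)_{k=1}^n$ must be linearly independent: any nontrivial relation $\sum_k c_k v_k^* = 0$ would, after pairing with $v_j$, give $c_j = 0$ for all $j$.

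\textbf{Plan for Part (3).} This is the geometric content and the main (though still routine) step. Fix $k$ and let $E_k = \operatorname{span}(v_j)_{j\neq k}$, which is $(n-1)$-dimensional by linear independence. From the biorthogonality $\langle v_j, v_k^*\rangle = 0$ for all $j \neq k$, the vector $v_k^*$ lies in the one-dimensional orthogonal complement $E_k^{\perp}$. Let $u$ be a unit vector spanning $E_k^{\perp}$, so $v_k^* = \lambda u$ for some scalar $\lambda$. The normalization $\langle v_k, v_k^*\rangle = 1$ forces $\lambda = 1/\langle v_k, u\rangle$, hence $\|v_k^*\|_2 = 1/|\langle v_k, u\rangle|$. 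Finally, decomposing $v_k = P_{E_k} v_k + \langle v_k, u\rangle u$ (where $P_{E_k}$ denotes orthogonal projection onto $E_k$) gives $\operatorname{dist}(v_k, E_k) = \|v_k - P_{E_k} v_k\|_2 = |\langle v_k, u\rangle|$, which combined with the previous identity yields the claim.

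\textbf{Where the work lies.} There is no real obstacle here; everything reduces to standard linear algebra. The only point that requires a moment of care is noting in Part (3) that $\operatorname{dist}(v_k, E_k) > 0$ (so that the reciprocal makes sense), which is guaranteed by the linear independence hypothesis that Part (2) ensures is present whenever the biorthogonal system is complete.
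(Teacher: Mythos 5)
Your proof is correct and takes essentially the same route as the paper's: Part (1) is the identical computation, and Part (3) uses the same geometric idea (decomposing $v_k$ into its component in $E_k$ plus a multiple of the unit normal, which is collinear with $v_k^*$ by biorthogonality). The only cosmetic difference is in Part (2), where you construct the dual system explicitly by coordinatizing $H$ and invoking Part (1), and spell out uniqueness via orthogonality to the basis, whereas the paper simply cites the abstract existence and uniqueness of a dual basis; your version is slightly more self-contained but not a different argument.
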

\begin{proof}
(1) follows directly  from
$$
\langle v_j,v_k^* \rangle= \langle De_j,(D^{-1})^*e_k \rangle
=\langle D^{-1}De_j,e_k \rangle=\langle e_j,e_k \rangle=\delta_{j,k}.
$$
To prove (2), we use the fact that any basis on a finite dimensional vector space has a unique dual basis. Since $H$ is Hilbert space, the dual basis also belongs to $H$ which forms a biorthogonal system together with the original basis. The completeness follows from the dimension argument.

Since $(v_i,v_i^*)_{i=1}^n$ is a complete biorthogonal system on the Hilbert space $H$, for any $k=1,2,\cdots,n$, we have $v_k^*\perp \span\{v_i:i\neq k\}$ and $\{v_i: i=1,\cdots,n, i \neq k\}\cup \{{v_k^*}\}$ form a basis on $H$. Thus we have the decomposition
$$
v_k=\sum_{i\neq k} a_i v_i + {\rm dist}(v_k,{\rm span}(v_j)_{j\neq k})\frac{v_k^*}{\V v_k^*\V_2}.
$$
Take inner product with $v_k^*$ at both sides and we have
$$
1= {\rm dist}(v_k,{\rm span}(v_j)_{j\neq k})\V v_k^*\V_2,
$$
which proves (3).

\end{proof}

\end{appendices}


\renewcommand\refname{Reference}
\bibliographystyle{plain}
\bibliography{feng}

\begin{thebibliography}{10}

\bibitem{InvofsparsenonHerm}
A.~Basak and M.~Rudelson.
\newblock Invertibility of sparse non-hermitian matrices.
\newblock {\em arXiv:1507.03525}.

\bibitem{NOUBofsvs}
C.~Cacciapuoti, A.~Maltsev, and B.~Schlein.
\newblock Local {M}archenko-{P}astur law at the hard edge of sample covariance
  matrices.
\newblock {\em Journal of Mathematical Physics}, 54(43302), 2012.

\bibitem{ShparbdMargDens}
G.~Livshyts, G.~Paouris, and P.~Pivovarov.
\newblock On sharp bounds for marginal densities of product measures.
\newblock {\em arXiv:1507.07949}.

\bibitem{InvofRM}
M.Rudelson.
\newblock Invertibility of random matrices: norm of the inverse.
\newblock {\em Annals of Mathematics}, 168(575-600), 2008.

\bibitem{SsvUB2}
Hoi~H. Nguyen and Van~H. Vu.
\newblock Normal vector of a random hyperplane.
\newblock {\em arXiv:1604.04897}.

\bibitem{Invofheavytail}
E.~Rebrova and K.~Tikhomirov.
\newblock Coverings of random ellipsoids, and invertibility of matrices with
  i.i.d. heavy-tailed entries.
\newblock {\em arXiv:1508.06690}.

\bibitem{LecnotesRMMR}
M.~Rudelson.
\newblock Lecture notes on non-asymptotic random matrix theory.
\newblock {\em notes from the AMS Short Course on Random Matrices}, 2013.

\bibitem{Nogapsdeloc}
M.~Rudelson and R.~Vershynin.
\newblock No-gaps delocalization for general random matrices.
\newblock arXiv:1506.04012.

\bibitem{UBssvMR}
M.~Rudelson and R.~Vershynin.
\newblock The least singular value of a random square matrix is $o(n^{-1/2})$.
\newblock {\em Comptes rendus de l'Acad\'{e}mie des sciences -
  Math\'{e}matique}, 346(893-896), 2008.

\bibitem{LOandInvofRM}
M.~Rudelson and R.~Vershynin.
\newblock The {L}ittlewood-{O}fford problem and invertibility of random
  matrices.
\newblock {\em Advances in Mathematics}, 218(600-633), 2008.

\bibitem{SMstSvRect}
M.~Rudelson and R.~Vershynin.
\newblock The smallest singular value of a rectangular random matrix.
\newblock {\em Communications on Pure and Applied Mathematics}, 62(1707-1739),
  2009.

\bibitem{NAtheoryofRM}
M.~Rudelson and R.~Vershynin.
\newblock Non-asymptotic theory of random matrices: extreme singular values.
\newblock {\em Proceedings of the International Congress of Mathematicians},
  Volume III(1576-1602), 2010.

\bibitem{Linfdeloc}
M.~Rudelson and R.~Vershynin.
\newblock Delocalization of eigenvectors of random matrices with independent
  entries.
\newblock {\em Duke Math. Journal}, 164(2507-2538), 2013.

\bibitem{HansonWrightineq}
M.~Rudelson and R.~Vershynin.
\newblock {H}anson-{W}right inequality and sub-gaussian concentration.
\newblock {\em Electronic Communications in Probability}, 18(1-9), 2013.

\bibitem{SBprobofLIofHD}
M.~Rudelson and R.~Vershynin.
\newblock Small ball probabilities for linear images of high dimensional
  distributions.
\newblock {\em International Mathematics Research Notices}, 2015(9594-9617),
  2014.

\bibitem{EMSteinGWeiss}
E.~M. Stein and G.~Weiss.
\newblock {\em Introduction to Fourier analysis on Euclidean spaces. Princeton
  Mathematical Series, No. 32}.
\newblock 1971.

\bibitem{DisttoLinfSZ}
S.~J. Szarek.
\newblock Spaces with large distance to $l_{\infty}^n$ and random matrices.
\newblock {\em American Journal of Mathematics}, 112(899-942), 1990.

\bibitem{TaoVuAnnals}
T.~Tao and V.~Vu.
\newblock Inverse {L}ittlewood-{O}fford theorems and the condition number of
  random discrete matrices.
\newblock {\em Annals of Mathematics}, 169(595-632), 2009.

\bibitem{TVsmallestSVdist}
T.~Tao and V.~Vu.
\newblock Random matrices: The distribution of the smallest singular values.
\newblock {\em Geometric and Functional Analysis}, 20(260-297), 2010.

\end{thebibliography}

\end{document}